\documentclass[11pt]{amsart}
\usepackage{amssymb,mathtools,amsthm}
\usepackage{subsupscripts}

\addtolength{\oddsidemargin}{-1cm}
\addtolength{\evensidemargin}{-1cm}
\addtolength{\textwidth}{2cm}

\numberwithin{equation}{section}

\theoremstyle{plain}
\newtheorem{thm}{Theorem}[section]
\newtheorem{prop}[thm]{Proposition}

\newtheorem{cor}[thm]{Corollary}

\theoremstyle{definition}

\newtheorem{rem}[thm]{Remark}

\newcommand{\Z}{\mathbb{Z}}

\newcommand{\Li}{\mathrm{Li}}
\newcommand{\bk}{{\boldsymbol{k}}}

\newcommand{\varray}[1]{\!\!\begin{array}{c}#1\end{array}\!}
\newcommand{\trF}[3]{{}_{#1}^{\phantom{(}}F_{#2}^{#3}}

\title{Truncated Hypergeometric Functions and Discretized Integrals}
\author{Shuji Yamamoto}
\date{July 2025}

\subjclass[2020]{Primary 33C05, Secondary 11M32}
\keywords{Truncated hypergeometric function, Discretized integral, 
Truncated multiple polylogarithm, Ohno-Zagier formula. }
\thanks{This research was supported by JSPS KAKENHI JP21K03185 
and JST, CREST Grant Number JPMJCR1913, Japan. }

\address{Interdisciplinary Faculty of Science and Engineering, Shimane University, 
1060 Nishi-Kawatsu, Matsue, 690-8504, Japan. }
\email{yamashu@riko.shimane-u.ac.jp}

\begin{document}
\begin{abstract}
We introduce a kind of finite truncation of the hypergeometric series 
and provide its discretized integral representation. 
This is motivated by recent results of Maesaka-Seki-Watanabe and Hirose-Matsusaka-Seki 
on the identity between truncated series and discretized integrals 
which comes from the mutliple zeta values and multiple polylogarithms. 
We also prove the formula of Ohno-Zagier type which relates the truncated multiple polylogarithms 
and the truncated hypergeometric series. 
\end{abstract}

\maketitle

\section{Introduction}
In 2024, Maesaka, Seki and Watanabe \cite{MSW} discovered an interesting identity 
\begin{equation}\label{eq:MSW}
\sum_{0<m_1<\cdots<m_r<N}\frac{1}{m_1^{k_1}\cdots m_r^{k_r}}
=\sum_{\substack{0<n_{i,1}\le\cdots\le n_{i,k_i}<N\ (1\le i\le r)\\ n_{i,k_i}<n_{i+1,1}\ (1\le i<r)}}
\prod_{i=1}^r\frac{1}{(N-n_{i,1})n_{i,2}\cdots n_{i,k_i}}, 
\end{equation}
where $k_1,\ldots,k_r$ and $N$ are positive integers. 
When $k_r\ge 2$, the left-hand side of \eqref{eq:MSW} is the \emph{truncation} of the series 
\[\zeta(k_1,\ldots,k_r)=\sum_{0<m_1<\cdots<m_r}\frac{1}{m_1^{k_1}\cdots m_r^{k_r}}, \]
called the \emph{multiple zeta value}. As is well known, $\zeta(k_1,\ldots,k_r)$ also has 
the iterated integral representation 
\[\zeta(k_1,\ldots,k_r)
=\int_{\substack{0<t_{i,1}<\cdots<t_{i,k_i}<1\ (1\le i\le r)\\ t_{i,k_i}<t_{i+1,1}\ (1\le i<r)}}
\prod_{i=1}^r\frac{dt_{i,1}dt_{i,2}\cdots dt_{i,k_i}}{(1-t_{i,1})t_{i,2}\cdots t_{i,k_i}}, \]
and the right-hand side of \eqref{eq:MSW} is its \emph{discretization}, that is, a Riemann sum 
approximating the integral. 
Thus, at least when $k_r\ge 2$, the identity \eqref{eq:MSW} can be regarded as a refinement 
of the identity between the series and integral representations of the multiple zeta value. 

As a generalization of \eqref{eq:MSW}, 
Hirose, Matsusaka and Seki \cite{HMS} proved the formula 
\begin{equation}\label{eq:HMS}
\begin{split}
\sum_{0<m_1<\cdots<m_r<N}&\frac{1}{m_1^{k_1}\cdots m_r^{k_r}}
\prod_{i=1}^r\frac{(Nx_{i+1}-m_i)_{m_i}}{(Nx_{i}-m_i)_{m_i}}\\
&=\sum_{\substack{0<n_{i,1}\le\cdots\le n_{i,k_i}<N\ (1\le i\le r)\\ n_{i,k_i}<n_{i+1,1}\ (1\le i<r)}}
\prod_{i=1}^r\frac{1}{(Nx_i-n_{i,1})n_{i,2}\cdots n_{i,k_i}},  
\end{split}
\end{equation}
where $x_1,\ldots,x_r$ are indeterminates, $x_{r+1}=1$ and 
$(a)_m=a(a+1)\cdots(a+m-1)$ denotes the rising factorial. 
This is an analogue of the identity between the series and integral representations of 
the multiple polylogarithm 
\begin{equation}\label{eq:MPL int}
\begin{split}
\sum_{0<m_1<\cdots<m_r}&\frac{1}{m_1^{k_1}\cdots m_r^{k_r}}
\prod_{i=1}^r\biggl(\frac{x_{i+1}}{x_i}\biggr)^{m_i}\\
&=\int_{\substack{0<t_{i,1}<\cdots<t_{i,k_i}<1\ (1\le i\le r)\\ t_{i,k_i}<t_{i+1,1}\ (1\le i<r)}} 
\prod_{i=1}^r\frac{dt_{i,1}dt_{i,2}\cdots dt_{i,k_i}}{(x_i-t_{i,1})t_{i,2}\cdots t_{i,k_i}}. 
\end{split}
\end{equation}
Although the left-hand side of \eqref{eq:HMS} is not the simple truncation of 
the series in \eqref{eq:MPL int} but contains the modification 
\begin{equation}\label{eq:modification}
\biggl(\frac{x_{i+1}}{x_i}\biggr)^{m_i}\rightsquigarrow 
\frac{(Nx_{i+1}-m_i)_{m_i}}{(Nx_{i}-m_i)_{m_i}}
=\frac{(Nx_{i+1}-1)\cdots(Nx_{i+1}-m_i)}{(Nx_{i}-1)\cdots(Nx_{i}-m_i)}, 
\end{equation}
Hirose-Matsusaka-Seki showed that the left-hand side of \eqref{eq:HMS} 
tends to that of \eqref{eq:MPL int} (see \cite[Proposition 2.5]{HMS}).  

More generally, let us consider the following question: 
Given an identity of the form ``series = integral'', 
does there exist its \emph{finite analogue} of the form 
\begin{equation}\label{eq:trunc disc}
\text{``truncated series = discretized integral''}? 
\end{equation}
We do not define precisely the terms ``truncated series'' and ``discretized integral'', 
but roughly speaking, we use them in the following sense:  
\begin{itemize}
\item $\sum_{m=0}^N a^{(N)}_m$ is a \emph{truncation} of $\sum_{m=0}^\infty a_m$ 
if $a^{(N)}_m\to a_m$ as $N\to\infty$, for each $m$. 
\item $\frac{1}{N}\sum_{n=0}^N f^{(N)}(n)$ is a \emph{discretization} of $\int_0^1 f(t)dt$ 
if $f^{(N)}(n/N)\to f(t)$ as $n,N\to\infty$ with $n/N\to t$, for each $0<t<1$. 
\end{itemize}
Thus we admit modifications like \eqref{eq:modification} in both. 
One may also require the convergences 
\[\sum_{m=0}^N a^{(N)}_m\to\sum_{m=0}^\infty a_m \quad\text{ and }\quad 
\frac{1}{N}\sum_{n=0}^N f^{(N)}(n)\to\int_0^1 f(t)dt, \]
but we will not do so in this paper. 

The purpose of the present paper is to give a finite analogue, in the above sense, 
of the Euler integral formula 
\begin{equation}\label{eq:HG int}
{}_{2}F_{1}\biggl(\varray{a,b\\ c};z\biggr)
=\frac{\Gamma(c)}{\Gamma(a)\Gamma(c-a)}\int_0^1 t^{a-1}(1-t)^{c-a-1}(1-tz)^{-b}dt
\end{equation}
for the hypergeometric series 
\begin{equation}\label{eq:HG}
{}_2F_1\biggl(\varray{a,b\\ c};z\biggr)
=\sum_{m=0}^\infty\frac{(a)_m(b)_m}{(c)_m m!}z^m. 
\end{equation}
We define the truncated hypergeometric function by 
\begin{equation}\label{eq:tHG}
\trF{2}{1}{[N]}\biggl(\varray{a,b\\ c};z\biggr)
\coloneqq \sum_{m=0}^{N}\frac{(a)_m(b)_m}{(c)_m m!}\frac{(N+1-m)_m}{(Nz^{-1}-m)_m}
\end{equation}
for any integer $N\ge 0$. 
Then we have the following finite analogue of \eqref{eq:HG int}: 

\begin{thm}[= Theorem \ref{thm:tHG int}]\label{thm:tHG int intro}
\[
\trF{2}{1}{[N]}\biggl(\varray{a,b\\ c};z\biggr)
=\frac{(a)_{N} (c-a)_{N}}{(c)_{N} N!}
\sum_{n=0}^{N}\frac{(1+n)_{N-n}}{(a+n)_{N-n}}\frac{(1+N-n)_{n}}{(c-a+N-n)_{n}}
\frac{(b+Nz^{-1}-n)_{n}}{(Nz^{-1}-n)_{n}}. 
\]
\end{thm}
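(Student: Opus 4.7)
The plan is to start from the right-hand side of the theorem and reduce it to the left-hand side by two applications of the Chu--Vandermonde identity for rising factorials,
\[
(X+Y)_n = \sum_{k=0}^n \binom{n}{k}(X)_k(Y)_{n-k}.
\]

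First, I would apply Chu--Vandermonde with $X = b$ and $Y = Nz^{-1}-n$, together with the elementary observation that $(Nz^{-1}-n)_{n-k}/(Nz^{-1}-n)_n = 1/(Nz^{-1}-k)_k$, to expand
\[
\frac{(b+Nz^{-1}-n)_n}{(Nz^{-1}-n)_n} = \sum_{k=0}^n \binom{n}{k}\frac{(b)_k}{(Nz^{-1}-k)_k}.
\]
Substituting this into the right-hand side and swapping the order of summation (letting $k$ range over $0,\dots,N$ and $n$ over $k,\dots,N$) decouples the $b$-dependence from the remaining ``Euler-beta part'' of the expression.

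Next, I would simplify the $n$-dependent factor using standard Pochhammer manipulations such as $(1+n)_{N-n}=N!/n!$ and $(a+n)_{N-n}=(a)_N/(a)_n$ (and similarly for the $(c-a)$ factors), to obtain
\[
\frac{(1+n)_{N-n}}{(a+n)_{N-n}}\frac{(1+N-n)_n}{(c-a+N-n)_n}
= \binom{N}{n}\,\frac{N!\,(a)_n(c-a)_{N-n}}{(a)_N(c-a)_N}.
\]
Using $\binom{n}{k}\binom{N}{n}=\binom{N}{k}\binom{N-k}{n-k}$, substituting $m=n-k$, and factoring $(a)_{m+k}=(a)_k(a+k)_m$, the inner sum over $n$ reduces to a second Chu--Vandermonde evaluation,
\[
\sum_{m=0}^{N-k}\binom{N-k}{m}(a+k)_m(c-a)_{N-k-m} = (c+k)_{N-k} = \frac{(c)_N}{(c)_k}.
\]

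Collecting the constants, the right-hand side collapses to $\sum_{k=0}^{N}\binom{N}{k}\frac{(a)_k(b)_k}{(c)_k(Nz^{-1}-k)_k}$, which is precisely the left-hand side since $(N+1-k)_k/k!=\binom{N}{k}$. I do not anticipate a major obstacle: the proof is combinatorial bookkeeping with Pochhammer symbols, and the only genuine insight is that expanding $(b+(Nz^{-1}-n))_n$ via Chu--Vandermonde is exactly what isolates $b$ so that the rest of the identity is a purely polynomial Chu--Vandermonde-type sum. As a sanity check, when $b=0$ only the $k=0$ term survives and the theorem reduces to the single Chu--Vandermonde identity $\sum_{n=0}^N\binom{N}{n}(a)_n(c-a)_{N-n}=(c)_N$, which correctly yields the value $1$ on both sides.
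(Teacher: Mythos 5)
Your proof is correct; I checked each step (the ratio $(Nz^{-1}-n)_{n-k}/(Nz^{-1}-n)_n=1/(Nz^{-1}-k)_k$, the rewriting of the beta factor as $\binom{N}{n}N!(a)_n(c-a)_{N-n}/((a)_N(c-a)_N)$, the binomial-coefficient identity, and the final collapse to $\sum_k\binom{N}{k}(a)_k(b)_k/((c)_k(Nz^{-1}-k)_k)$, which is indeed the left-hand side since $(N+1-k)_k/k!=\binom{N}{k}$) and they all go through. The key engine is the same as the paper's, namely the Chu--Vandermonde convolution $\sum_{k}\binom{n}{k}(X)_k(Y)_{n-k}=(X+Y)_n$ applied twice together with an interchange of summation, but the organization is genuinely different: the paper works from the left-hand side, first proving a general formula (Theorem \ref{thm:gen tHG int}) that expresses $\trF{p+1}{q+1}{[N]}$ as a discretized integral of $\trF{p}{q}{[n]}(\tfrac{n}{N}z)$ via the truncated beta identity of Proposition \ref{prop:disc beta}, and then specializes using the closed form of $\trF{1}{0}{[n]}$ from Proposition \ref{prop:t1F0}; your two Chu--Vandermonde steps are precisely these two ingredients run in reverse (expanding the $(b+Nz^{-1}-n)_n$ factor is Proposition \ref{prop:t1F0} backwards, and your inner sum $\sum_m\binom{N-k}{m}(a+k)_m(c-a)_{N-k-m}=(c+k)_{N-k}$ is the beta identity backwards). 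What the paper's route buys is the general $p,q$ statement and its iterated corollary for ${}_{p+1}F_p(1)$; what your route buys is a short, self-contained verification of the ${}_2F_1$ case that never needs the intermediate notion of $\trF{p}{q}{[n]}$ at the rescaled argument $\tfrac{n}{N}z$. Your $b=0$ sanity check is also apt.
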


Here the factor $(a)_{N} (c-a)_{N}/(c)_{N}(N-1)!$ is the finite version of $\Gamma(c)/\Gamma(a)\Gamma(c-a)$; 
in fact, the former is the reciprocal of the \emph{truncated beta function} $B^{[N]}(a,c-a)$, 
which will be introduced in \S2. 

Note that $\trF{2}{1}{[N]}(z)$ is essentially equivalent to the terminating ${}_3F_2(1)$: 
\begin{equation}\label{eq:terminating 3F2}
\trF{2}{1}{[N]}\biggl(\varray{a,b\\ c};z\biggr)={}_3F_2\biggl(\varray{a,b,-N\\ c,1-Nz^{-1}};1\biggr). 
\end{equation}
This relation enables us to derive certain formulas on the former, 
including Theorem \ref{thm:tHG int intro}, from (often previously known) results on the latter. 
To put it another way, we can interpret some results on the terminating ${}_3F_2(1)$ 
as finite analogues of formulas on ${}_2F_1$. 

Compared with Hirose-Matsusaka-Seki's truncated multiple polylogarithms \eqref{eq:HMS}, 
it may look more natural to define the truncated hypergeometric function as 
\[\trF{2}{1}{(N)}\biggl(\varray{a,b\\ c};z\biggr)
\coloneqq \sum_{m=0}^{N-1}\frac{(a)_m(b)_m}{(c)_m m!}\frac{(N-m)_m}{(Nz^{-1}-m)_m}. \]
This is related with the former definition as 
\[\trF{2}{1}{(N)}\biggl(\varray{a,b\\ c};z\biggr)
=\trF{2}{1}{[N-1]}\biggl(\varray{a,b\\ c};\frac{N-1}{N}z\biggr), \]
hence these two types of truncation are essentially equivalent. 
We mainly adopt $\trF{2}{1}{[N]}$ because the formulas are often cleaner than those for $\trF{2}{1}{(N)}$. 
An exception is the Ohno-Zagier type formula, which provides a relationship 
between the truncated hypergeometric function and the truncated multiple polylogarithms: 

\begin{thm}[=Theorem \ref{thm:tOZ}]\label{thm:tOZ intro}
We have 
\[
\Phi_0^{(N)}(X,Y,Z;z)=\frac{1}{Z-XY}
\biggl\{\trF{2}{1}{(N)}\biggl(\varray{\alpha-X,\beta-X\\ 1-X};z\biggr)-1\biggr\}, 
\]
where $\Phi_0^{(N)}$ denotes the generating function of sums of truncated multiple polylogarithms 
of fixed weight, depth and height (see \eqref{eq:Phi_0^(N) def} for the precise definition), 
and $\alpha$ and $\beta$ satisfy the relations 
\[\alpha+\beta=X+Y,\quad \alpha\beta=Z. \]
\end{thm}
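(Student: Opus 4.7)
The plan is to mirror the classical derivation of the Ohno-Zagier relation for multiple zeta values, with the Hirose-Matsusaka-Seki identity \eqref{eq:HMS} playing the role of the iterated integral representation of the multiple polylogarithm. First, unpack $\Phi_0^{(N)}(X,Y,Z;z)$ as a sum over admissible indices $\bk=(k_1,\ldots,k_r)$ of truncated multiple polylogarithms weighted by a monomial in $X, Y, Z$ recording the weight, depth and height of $\bk$; then use \eqref{eq:HMS}, specialized so that only the argument $z$ survives in the polylogarithm, to rewrite every such truncated multiple polylogarithm as a sum over discretized chains $(n_{i,j})$ with $0<n_{i,1}\le\cdots\le n_{i,k_i}<N$ and $n_{i,k_i}<n_{i+1,1}$.

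The second step is to interchange the order of summation. Viewing the combined data $(\bk,(n_{i,j}))$ as a single chain $0<n_1\le n_2\le\cdots<N$ together with a marking that records which consecutive inequalities are strict (these correspond to the block boundaries of $\bk$), one first sums over all admissible markings of a fixed chain. For each chain of length $m$, the resulting sum is an explicit generating series in $X, Y, Z$: interior (weak) steps contribute $X$, strict boundary steps contribute $Y$ or $Z$ depending on whether the completed block has height one or greater, and the whole sum can be carried out in closed form. The relevant denominator turns out to be $Z-XY=(\alpha-X)(\beta-X)$, which is the algebraic origin of the substitution $\alpha+\beta=X+Y$, $\alpha\beta=Z$.

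After factoring out $1/(Z-XY)$, the remaining sum over chains reorganizes into a single sum over $m$ whose $m$-th term is exactly $\frac{(\alpha-X)_m(\beta-X)_m}{(1-X)_m\,m!}\cdot\frac{(N-m)_m}{(Nz^{-1}-m)_m}$, that is, the $m$-th term of $\trF{2}{1}{(N)}\bigl(\varray{\alpha-X,\beta-X\\ 1-X};z\bigr)$. The $m=0$ term of the truncated hypergeometric function (equal to $1$) does not correspond to any nonempty $\bk$, which accounts for the subtracted $1$ in the statement. The main obstacle is the careful bookkeeping of this interchange: tracking strict versus weak inequalities between consecutive discretization points, and correctly attaching the powers of $X, Y, Z$ to each combinatorial configuration. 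Once this bookkeeping is organized, the remaining manipulations are standard identities for rising factorials and geometric-series sums, closely mirroring Ohno-Zagier's original argument in the iterated integral setting.
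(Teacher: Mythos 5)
Your plan routes the whole computation through the discretized-integral side via \eqref{eq:HMS}, and as described it has two concrete problems. First, the combinatorial dictionary is wrong: you identify the block boundaries of $\bk$ with the strict inequalities in the merged chain, but within a block the consecutive variables only satisfy $n_{i,j}\le n_{i,j+1}$ and may well be strictly increasing, so a fixed chain with a fixed pattern of strict/weak steps admits many block decompositions (any subset of the strict steps, subject to the admissibility condition on the last block), and the summand genuinely depends on the decomposition because the first variable of each block carries the factor $1/(Nz^{-1}-n_{i,1})$ while the others carry $1/n_{i,j}$. Consequently the weights $X$, $Y$, $Z$ cannot be attached to ``weak steps'' and ``strict steps'' as you propose; the correct bookkeeping is, up to an overall $Z^{-1}$, a factor $Y$ per singleton block and $X^{k_i-2}Z$ per block of size $k_i\ge 2$, which is a statement about blocks, not about steps. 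Second, even with the combinatorics repaired, summing on the discretized-integral side would at best produce another discretized-integral expression, and converting that into the series form $\sum_m\frac{(\alpha-X)_m(\beta-X)_m}{(1-X)_m\,m!}\frac{(N-m)_m}{(Nz^{-1}-m)_m}$ is essentially the content of Theorem \ref{thm:tHG int intro} --- a substantial ingredient your outline silently assumes.

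The detour is also unnecessary. The paper's proof never touches \eqref{eq:HMS}: it works directly with the series definition of $\Li_\bk^{(N)}(z)$, in which the modification factor $(N-m_r)_{m_r}/(Nz^{-1}-m_r)_{m_r}$ is attached only to the last summation variable $m_r=n$. Summing over all indices with the points $0<m_1<\cdots<m_r=n<N$ fixed gives, at each intermediate point $m$, the geometric-type series $1+\frac{Y}{m}+\frac{Z}{m^2}+\frac{XZ}{m^3}+\cdots=\frac{(m+\alpha-X)(m+\beta-X)}{m(m-X)}$ and, at $n$, the factor $\frac{1}{n^2}+\frac{X}{n^3}+\cdots=\frac{1}{n(n-X)}$; the product over $0<m<n$ then telescopes into $\frac{1}{(\alpha-X)(\beta-X)}\cdot\frac{(\alpha-X)_n(\beta-X)_n}{(1-X)_n\,n!}$ with $(\alpha-X)(\beta-X)=Z-XY$, and the modification factor simply rides along. (The paper also observes that the theorem follows formally from the original Ohno--Zagier identity, since that is a power-series identity in $z$ and one may replace $z^m$ by $(N-m)_m/(Nz^{-1}-m)_m$ coefficient by coefficient.) If you wish to salvage your route you would need both the corrected block bookkeeping and an appeal to the truncated Euler integral formula; the series-side argument avoids all of this.
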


The contents of this paper are as follows. 
In Section 2, we first define the truncated beta function by truncating 
the infinite \emph{product} representation of the beta function. 
Then we give a discretized integral representation for it. 
In Section 3, we introduce the truncated generalized hypergeometric function $\trF{p}{q}{[N]}$. 
After computing $\trF{1}{0}{[N]}$, we prove the discretized integral formula 
representing $\trF{p+1}{q+1}{[N]}$ in terms of $\trF{p}{q}{[n]}$ ($0\le n\le N$). 
In particular, we obtain Theorem \ref{thm:tHG int intro}. 
In Section 4, we present two applications of Theorem \ref{thm:tHG int intro}, 
namely, the finite analogues of Gauss' hypergeometric theorem and 
Pfaff's and Euler's transformation formulas. 
In Section 5, we study the Ohno-Zagier type formula (Theorem \ref{thm:tOZ intro}) 
and its consequences. 

\section{Truncated beta function}
The product expression of the gamma function 
\[\Gamma(a)=a^{-1}e^{-\gamma a}\prod_{n=1}^\infty\biggl(1+\frac{a}{n}\biggr)^{-1}e^{a/n} \]
implies 
\[B(a,b)=\frac{\Gamma(a)\Gamma(b)}{\Gamma(a+b)}
=\frac{a+b}{ab}\prod_{n=1}^\infty\frac{(a+b+n)n}{(a+n)(b+n)}. \]
We define the \emph{truncated beta function} as follows: 
\[B^{[N]}(a,b)
\coloneqq\frac{a+b}{ab}\prod_{n=1}^{N-1}\frac{(a+b+n)n}{(a+n)(b+n)}
=\frac{(a+b)_N\,(N-1)!}{(a)_N (b)_N}. \] 

\begin{prop}\label{prop:disc beta}
\begin{equation}\label{eq:disc beta}
B^{[N]}(a,b)=
\frac{1}{N}\sum_{n=0}^N \frac{(1+n)_{N-n}}{(a+n)_{N-n}}\frac{(1+N-n)_n}{(b+N-n)_n}. 
\end{equation}
\end{prop}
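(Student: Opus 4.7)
The plan is to show that after rewriting the Pochhammer symbols on the right-hand side of \eqref{eq:disc beta}, the sum reduces to a classical Chu--Vandermonde identity for rising factorials.

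First I would simplify each of the four Pochhammer factors in the summand by using the identities
\[(1+n)_{N-n}=\frac{N!}{n!}, \quad (1+N-n)_n=\frac{N!}{(N-n)!}, \quad
(a+n)_{N-n}=\frac{(a)_N}{(a)_n}, \quad (b+N-n)_n=\frac{(b)_N}{(b)_{N-n}}. \]
Substituting these into the right-hand side of \eqref{eq:disc beta} and pulling all $n$-independent factors outside the sum, the expression collapses to
\[\frac{(N-1)!}{(a)_N (b)_N}\sum_{n=0}^{N}\binom{N}{n}(a)_n (b)_{N-n}. \]

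The next step is to invoke the Chu--Vandermonde identity for rising factorials,
\[\sum_{n=0}^{N}\binom{N}{n}(a)_n (b)_{N-n}=(a+b)_N, \]
which is classical (it follows, for instance, by comparing coefficients of $z^N$ on both sides of $(1-z)^{-a}(1-z)^{-b}=(1-z)^{-(a+b)}$). Plugging this into the previous display gives $\dfrac{(a+b)_N (N-1)!}{(a)_N (b)_N}$, which is exactly $B^{[N]}(a,b)$ by its definition.

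There is essentially no obstacle beyond correctly manipulating the Pochhammer symbols; the entire proof rests on the Chu--Vandermonde identity, and the only thing to be careful about is the bookkeeping of the boundary cases $n=0$ and $n=N$ (where one of the exponents in the Pochhammers is zero and the corresponding factor equals $1$), which the identities above handle uniformly.
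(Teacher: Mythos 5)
Your proof is correct and is essentially the same as the paper's: both reduce the sum to the Chu--Vandermonde identity $\sum_{n=0}^N \binom{N}{n}(a)_n(b)_{N-n}=(a+b)_N$ (equivalently, $\sum_{n=0}^N\frac{(a)_n}{n!}\frac{(b)_{N-n}}{(N-n)!}=\frac{(a+b)_N}{N!}$), derived from the generating-series identity $(1-x)^{-a}(1-x)^{-b}=(1-x)^{-(a+b)}$. The Pochhammer manipulations you carry out are exactly the ones implicit in the paper's one-line computation.
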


\begin{proof}
First recall the identity 
\begin{equation}\label{eq:binomial}
\sum_{n=0}^N\frac{(a)_n}{n!}\frac{(b)_{N-n}}{(N-n)!}=\frac{(a+b)_N}{N!}, 
\end{equation}
which follows from the identity of generating series 
\[(1-x)^{-a}(1-x)^{-b}=(1-x)^{-(a+b)}. \]
Then we have 
\[\frac{(a)_N(b)_N}{(N!)^2}\sum_{n=0}^N\frac{(1+n)_{N-n}}{(a+n)_{N-n}}\frac{(1+N-n)_n}{(b+N-n)_n}
=\sum_{n=0}^N\frac{(a)_n}{n!}\frac{(b)_{N-n}}{(N-n)!}
=\frac{(a+b)_N}{N!}, \]
which proves the assertion. 
\end{proof}

Proposition \ref{prop:disc beta} is a finite analogue of the beta integral formula 
\[B(a.b)=\int_0^1 t^{a-1}(1-t)^{b-1}dt. \]
To confirm it, we have to show that the function 
\[t^{a-1}(1-t)^{b-1}\]
for $0<t<1$ is approximated by the discrete function 
\[\frac{(1+n)_{N-n}}{(a+n)_{N-n}}\frac{(1+N-n)_n}{(b+N-n)_n}\]
with $t\fallingdotseq n/N$. More precisely: 

\begin{prop}
Let $t$ be a constant with $0<t<1$. 
If two integers $N$ and $n$ tend to infinity satisfying $n/N\to t$, then we have 
\begin{equation}\label{eq:dic t^a-1}
\frac{(1+n)_{N-n}}{(a+n)_{N-n}}\to t^{a-1},\quad 
\frac{(1+N-n)_{n}}{(b+N-n)_{n}}\to (1-t)^{b-1}. 
\end{equation}
\end{prop}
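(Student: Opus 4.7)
The plan is to rewrite each rising factorial as a ratio of Gamma functions and then apply the standard asymptotic $\Gamma(x+c)/\Gamma(x)\sim x^{c}$ (valid as $x\to\infty$ for fixed $c$).

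First I would observe that
\[
(1+n)_{N-n}=\frac{N!}{n!}=\frac{\Gamma(N+1)}{\Gamma(n+1)},\qquad
(a+n)_{N-n}=\frac{\Gamma(a+N)}{\Gamma(a+n)},
\]
so that
\[
\frac{(1+n)_{N-n}}{(a+n)_{N-n}}
=\frac{\Gamma(N+1)}{\Gamma(a+N)}\cdot\frac{\Gamma(a+n)}{\Gamma(n+1)}.
\]
As $N\to\infty$ the first factor behaves like $N^{1-a}$, and as $n\to\infty$ (which is forced by $n/N\to t>0$) the second factor behaves like $n^{a-1}$. Multiplying the two asymptotics gives $(n/N)^{a-1}\to t^{a-1}$, which is the first claimed limit.

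For the second limit I would proceed identically, writing
\[
(1+N-n)_{n}=\frac{\Gamma(N+1)}{\Gamma(N-n+1)},\qquad
(b+N-n)_{n}=\frac{\Gamma(b+N)}{\Gamma(b+N-n)},
\]
so that
\[
\frac{(1+N-n)_{n}}{(b+N-n)_{n}}
=\frac{\Gamma(N+1)}{\Gamma(b+N)}\cdot\frac{\Gamma(b+N-n)}{\Gamma(N-n+1)}.
\]
Here I need $N-n\to\infty$ as well; this is guaranteed by the hypothesis $n/N\to t<1$. Applying $\Gamma(x+c)/\Gamma(x)\sim x^{c}$ to both factors yields $N^{1-b}\cdot(N-n)^{b-1}=((N-n)/N)^{b-1}\to(1-t)^{b-1}$.

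There is no real obstacle; the only minor point requiring attention is checking that both $n$ and $N-n$ tend to infinity, which uses $0<t<1$ in an essential way (if $t=0$ or $t=1$ the statement degenerates and the asymptotic tool no longer applies uniformly). Everything else is a direct invocation of the Gamma-ratio asymptotic, which can be cited from any standard reference or derived in one line from Stirling's formula.
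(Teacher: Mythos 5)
Your argument is correct and is essentially identical to the paper's proof: the paper also writes the ratio as $\frac{\Gamma(1+N)}{\Gamma(a+N)}\frac{\Gamma(a+n)}{\Gamma(1+n)}$ and applies the Stirling-derived asymptotic $\Gamma(a+x)/\Gamma(x)\sim x^a$, then handles the second limit by symmetry rather than writing it out as you do. No substantive difference.
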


\begin{proof}
By symmetry, it suffices to show the first formula. 
By Stirling's formula 
\[\Gamma(x)\sim \sqrt{2\pi}x^{x-1/2}e^{-x}\quad \text{as $x\to\infty$}, \]
we have 
\begin{align}
\label{eq:Gamma quot asymp}
\frac{\Gamma(a+x)}{\Gamma(x)}
&\sim \frac{(a+x)^{a+x-1/2}e^{-(a+x)}}{x^{x-1/2}e^{-x}}\\
\notag
&=\biggl(1+\frac{a}{x}\biggr)^{a+x-1/2}\cdot e^{-a}\cdot x^{a}\sim x^{a} 
\quad \text{as $x\to\infty$}. 
\end{align}
Hence it holds that 
\[\frac{(1+n)_{N-n}}{(a+n)_{N-n}}
=\frac{\Gamma(1+N)}{\Gamma(a+N)}\frac{\Gamma(a+n)}{\Gamma(1+n)}
\sim N^{1-a}\cdot n^{a-1}\to t^{a-1}.\qedhere \]
\end{proof}

\begin{rem}
\begin{enumerate}
\item 
In \cite[Remark 2.2.3]{AAR}, it is mentioned that the equation \eqref{eq:binomial} is 
a discrete form of the beta integral formula, in the sense that one can show  
\[\frac{N!}{(a+b)_N}\sum_{n=0}^N\frac{(a)_n}{n!}\frac{(b)_{N-n}}{(N-n)!}
\to\frac{\Gamma(a+b)}{\Gamma(a)\Gamma(b)}\int_0^1 t^{a-1}(1-t)^{b-1}dt\]
as $N\to\infty$. 

\item 
The following generalization of the formula \eqref{eq:disc beta} 
is proved by the same argument: 
\[\frac{(a_1+\cdots+a_d)_N((N-1)!)^{d-1}}{(a_1)_N\cdots(a_d)_N}
=\frac{1}{N^{d-1}}\sum_{\substack{n_1,\ldots,n_d\ge 0\\ n_1+\cdots+n_d=N}}
\prod_{i=1}^d\frac{(1+n_i)_{N-n_i}}{(a_i+n_i)_{N-n_i}}. \]
This is a finite analogue of the multivariate beta integral formula: 
\[\frac{\Gamma(a_1)\cdots\Gamma(a_d)}{\Gamma(a_1+\cdots+a_d)}
=\int_{\substack{0<t_1,\ldots,t_d<1\\ t_1+\cdots+t_d=1}}
\prod_{i=1}^d t_i^{a_i-1}\ dt_1\dots dt_{d-1}\]
\end{enumerate}
\end{rem}

\section{Truncated generalized hypergeometric functions}
We define the truncated generalized hypergeometric functions by 
\[\trF{p}{q}{[N]}\biggl(\varray{a_1,\ldots,a_p\\ b_1,\ldots,b_q};z\biggr)
\coloneqq \sum_{m=0}^{N}\frac{(a_1)_m\cdots(a_p)_m}{(b_1)_m\cdots(b_q)_m m!}\frac{(N+1-m)_m}{(Nz^{-1}-m)_m}. \]
As in the case of ${}_2F_1$, this can be written in terms of the terminating hypergeometric series: 
\[\trF{p}{q}{[N]}\biggl(\varray{a_1,\ldots,a_p\\ b_1,\ldots,b_q};z\biggr)
={}_{p+1}F_{q+1}\biggl(\varray{a_1,\ldots,a_p,-N\\ b_1,\ldots,b_q,1-Nz^{-1}};1\biggr). \]

The following formula is the truncated analogue of the binomial theorem 
\[{}_1F_0\biggl(\varray{a\\ -};z\biggr)=(1-z)^{-a}. \]

\begin{prop}\label{prop:t1F0}
\begin{equation}\label{eq:t1F0}
\trF{1}{0}{[N]}\biggl(\varray{a\\ -};z\biggr)=\frac{(a+Nz^{-1}-N)_{N}}{(Nz^{-1}-N)_{N}}. 
\end{equation}
\end{prop}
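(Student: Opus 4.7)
The plan is to recognize the left-hand side as a terminating Gauss hypergeometric series evaluated at $1$, apply the Chu--Vandermonde summation, and then rewrite the result to match the stated form by reversing Pochhammer factors.

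First I would rewrite the summand using the elementary identities $(N+1-m)_m = (-1)^m(-N)_m$ and $(Nz^{-1}-m)_m = (-1)^m(1-Nz^{-1})_m$, so that the factors of $(-1)^m$ cancel and
\[
\trF{1}{0}{[N]}\biggl(\varray{a\\ -};z\biggr)
=\sum_{m=0}^{N}\frac{(a)_m(-N)_m}{(1-Nz^{-1})_m\, m!}
={}_{2}F_{1}\biggl(\varray{a,-N\\ 1-Nz^{-1}};1\biggr).
\]
This is the $p=1$, $q=0$ case of the general relation \eqref{eq:terminating 3F2} that was already noted in the introduction, so strictly speaking one does not even need to redo this computation.

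Next I would invoke the classical Chu--Vandermonde summation
\[
{}_{2}F_{1}\biggl(\varray{a,-N\\ c};1\biggr)=\frac{(c-a)_N}{(c)_N},
\]
applied with $c=1-Nz^{-1}$, to obtain
\[
\trF{1}{0}{[N]}\biggl(\varray{a\\ -};z\biggr)=\frac{(1-Nz^{-1}-a)_N}{(1-Nz^{-1})_N}.
\]
To convert this to the form in \eqref{eq:t1F0}, I would apply the reflection identity $(x)_N=(-1)^N(1-x-N)_N$ to both numerator and denominator: with $x=1-Nz^{-1}-a$ one gets $1-x-N=a+Nz^{-1}-N$, and with $x=1-Nz^{-1}$ one gets $1-x-N=Nz^{-1}-N$. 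The two factors of $(-1)^N$ cancel and produce exactly $(a+Nz^{-1}-N)_N/(Nz^{-1}-N)_N$.

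There is no real obstacle here; the only point that requires mild care is the sign bookkeeping in the two Pochhammer reversals, which is why the appearance of the shifted arguments $a+Nz^{-1}-N$ and $Nz^{-1}-N$ on the right-hand side of \eqref{eq:t1F0} is the correct match for Chu--Vandermonde. An alternative route would be a direct induction on $N$, comparing $\trF{1}{0}{[N]}$ with $\trF{1}{0}{[N-1]}$, but the Chu--Vandermonde approach above is cleaner and fits the spirit of the paper, which emphasizes deducing truncated identities from known results on terminating ${}_{p+1}F_{q+1}$.
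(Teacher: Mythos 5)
Your proof is correct and is essentially the same as the paper's: the paper multiplies the sum by $(Nz^{-1}-N)_N/N!$ and recognizes it as the convolution identity \eqref{eq:binomial}, which is exactly the Chu--Vandermonde theorem in Pochhammer-convolution form (as the paper's own remark following the proposition notes). Your route through ${}_2F_1(a,-N;1-Nz^{-1};1)$ and the reflection $(x)_N=(-1)^N(1-x-N)_N$ just repackages the same computation, with the extra sign bookkeeping that the paper's formulation avoids.
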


\begin{proof}
By the equation \eqref{eq:binomial}, we have 
\[\frac{(Nz^{-1}-N)_N}{N!}\trF{1}{0}{[N]}\biggl(\varray{a\\ -};z\biggr)
=\sum_{n=0}^N\frac{(a)_m}{m!}\frac{(Nz^{-1}-N)_{N-m}}{(N-m)!}=\frac{(a+Nz^{-1}-N)_N}{N!}, \]
which shows the assertion. 
\end{proof}

\begin{rem}
\begin{enumerate}
\item By \eqref{eq:Gamma quot asymp}, one has 
\begin{align*}
\frac{(a+Nz^{-1}-N)_{N}}{(Nz^{-1}-N)_{N}}
&=\frac{\Gamma(a+Nz^{-1})}{\Gamma(Nz^{-1})}\frac{\Gamma(Nz^{-1}-N)}{\Gamma(a+Nz^{-1}-N)}\\
&\sim (Nz^{-1})^a(Nz^{-1}-N)^{-a}=(1-z)^{-a} 
\end{align*}
as $N\to\infty$. 

\item In view of the equation 
\[\trF{1}{0}{[N]}\biggl(\varray{a\\ -};z\biggr)={}_2F_1\biggl(\varray{a,-N\\ 1-Nz^{-1}};1\biggr), \]
the formula \eqref{eq:t1F0} is equivalent to the Chu-Vandermonde theorem 
(cf.~\cite[Corollary 2.2.3]{AAR}). 
\end{enumerate}
\end{rem}

Next, we prove the finite version of the integral formula 
\begin{equation}\label{eq:gen HG int}
\begin{split}
&{}_{p+1}F_{q+1}\biggl(\varray{a,a_1,\ldots,a_p\\ b,b_1,\ldots,b_q};z\biggr)\\
&=\frac{\Gamma(b)}{\Gamma(a)\Gamma(b-a)}\int_0^1 t^{a-1}(1-t)^{b-a-1}
{}_pF_q\biggl(\varray{a_1,\ldots,a_p\\ b_1,\ldots,b_q};tz\biggr)dt. 
\end{split}
\end{equation}

\begin{thm}\label{thm:gen tHG int}
\begin{equation}\label{eq:gen tHG int}
\begin{split}
&\trF{p+1}{q+1}{[N]}\biggl(\varray{a,a_1,\ldots,a_p\\ b,b_1,\ldots,b_q};z\biggr)\\
&=\frac{(a)_{N} (b-a)_{N}}{(b)_{N} N!}
\sum_{n=0}^{N}\frac{(1+n)_{N-n}}{(a+n)_{N-n}}\frac{(1+N-n)_{n}}{(b-a+N-n)_{n}}
\trF{p}{q}{[n]}\biggl(\varray{a_1,\ldots,a_p\\ b_1,\ldots,b_q};\frac{n}{N}z\biggr). 
\end{split}
\end{equation}
\end{thm}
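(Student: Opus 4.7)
The plan is to expand the right-hand side, interchange the order of summation, and reduce the resulting inner sum to the binomial identity \eqref{eq:binomial} --- the very same identity that powered the proof of Proposition \ref{prop:disc beta}.

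First I would exploit the crucial simplification $n\cdot(\tfrac{n}{N}z)^{-1}=Nz^{-1}$, which makes the ``denominator'' $(Nz^{-1}-m)_m$ in the expansion
\[
\trF{p}{q}{[n]}\biggl(\varray{a_1,\ldots,a_p\\ b_1,\ldots,b_q};\frac{n}{N}z\biggr)
=\sum_{m=0}^{n}\frac{(a_1)_m\cdots(a_p)_m}{(b_1)_m\cdots(b_q)_m\, m!}\frac{(n+1-m)_m}{(Nz^{-1}-m)_m}
\]
independent of $n$. Then I would rewrite the rising-factorial ratios in the kernel via
\[
\frac{(1+n)_{N-n}}{(a+n)_{N-n}}=\frac{N!\,(a)_n}{n!\,(a)_N},\qquad \frac{(1+N-n)_{n}}{(b-a+N-n)_{n}}=\frac{N!\,(b-a)_{N-n}}{(N-n)!\,(b-a)_N},
\]
after which the prefactor $(a)_N(b-a)_N/((b)_N N!)$ cancels cleanly and the right-hand side collapses to
\[
\frac{N!}{(b)_N}\sum_{n=0}^{N}\frac{(a)_n}{n!}\frac{(b-a)_{N-n}}{(N-n)!}\trF{p}{q}{[n]}\biggl(\varray{a_1,\ldots,a_p\\ b_1,\ldots,b_q};\frac{n}{N}z\biggr).
\]

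Next I would substitute the expanded form of $\trF{p}{q}{[n]}$, swap the two sums so that $m$ becomes outer (with $n$ running from $m$ to $N$), and use $(n+1-m)_m/n!=1/(n-m)!$. Setting $k=n-m$ and splitting $(a)_{m+k}=(a)_m(a+m)_k$, the inner sum becomes
\[
(a)_m\sum_{k=0}^{N-m}\frac{(a+m)_k}{k!}\frac{(b-a)_{N-m-k}}{(N-m-k)!}=(a)_m\frac{(b+m)_{N-m}}{(N-m)!}
\]
by \eqref{eq:binomial} applied with $(a,b,N)$ replaced by $(a+m,b-a,N-m)$. Since $(b+m)_{N-m}=(b)_N/(b)_m$ and $N!/(N-m)!=(N+1-m)_m$, the whole expression reassembles into the defining series of $\trF{p+1}{q+1}{[N]}$ on the left-hand side.

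The only conceptual step in the proof is the reduction to \eqref{eq:binomial}; the main obstacle is purely algebraic bookkeeping, where one must be careful with the order of rewriting the kernel factors and with the shifts $a\to a+m$, $N\to N-m$ in the final application of \eqref{eq:binomial}. No new combinatorial identity beyond \eqref{eq:binomial} is required, which is precisely what makes Theorem \ref{thm:gen tHG int} the natural generalization of Proposition \ref{prop:disc beta}.
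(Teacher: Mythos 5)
Your proof is correct and is essentially the paper's own argument run in reverse: the paper starts from the left-hand side and inserts Proposition \ref{prop:disc beta} (itself a repackaging of \eqref{eq:binomial}) before the substitution $n=l+m$ and an interchange of summation, while you start from the right-hand side, simplify the kernel to $\frac{(a)_n}{n!}\frac{(b-a)_{N-n}}{(N-n)!}$, interchange summation, and apply \eqref{eq:binomial} directly with the shift $(a,b,N)\to(a+m,b-a,N-m)$. The key observation $n\cdot(\tfrac{n}{N}z)^{-1}=Nz^{-1}$ and all the bookkeeping check out, so no gap.
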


\begin{proof}
We start from 
\begin{align*}
&\frac{(b)_{N} N!}{(a)_{N}(b-a)_{N}}
\trF{p+1}{q+1}{[N]}\biggl(\varray{a,a_1,\ldots,a_p\\ b,b_1,\ldots,b_q};z\biggr)\\
&=\sum_{m=0}^{N}\frac{(b+m)_{N-m} N!}{(a+m)_{N-m}(b-a)_{N}}
\frac{(a_1)_m\cdots(a_p)_m}{(b_1)_m\cdots(b_q)_m m!}\frac{(N+1-m)_m}{(Nz^{-1}-m)_m}\\
&=\sum_{m=0}^{N}\frac{(b+m)_{N-m}(N-m)!}{(a+m)_{N-m}(b-a)_{N-m}}\\
&\mspace{60mu}\cdot\frac{(1+N-m)_m}{(b-a+N-m)_m}
\frac{(a_1)_m\cdots(a_p)_m}{(b_1)_m\cdots(b_q)_m m!}\frac{(N+1-m)_m}{(Nz^{-1}-m)_m}. 
\end{align*}
By Proposition \ref{prop:disc beta}, we have 
\[\frac{(b+m)_{N-m}(N-m)!}{(a+m)_{N-m}(b-a)_{N-m}}
=\sum_{l=0}^{N-m}\frac{(1+l)_{N-m-l}}{(a+m+l)_{N-m-l}}\frac{(1+N-m-l)_l}{(b-a+N-m-l)_l}, \]
which implies 
\begin{align*}
&\frac{(b)_{N} N!}{(a)_{N}(b-a)_{N}}
\trF{p+1}{q+1}{[N]}\biggl(\varray{a,a_1,\ldots,a_p\\ b,b_1,\ldots,b_q};z\biggr)\\
&=\sum_{m=0}^{N}\Biggl(\sum_{l=0}^{N-m}\frac{(1+l)_{N-m-l}}{(a+m+l)_{N-m-l}}
\frac{(1+N-m-l)_l}{(b-a+N-m-l)_l}\Biggr)\\
&\mspace{60mu}\cdot\frac{(1+N-m)_m}{(b-a+N-m)_m}
\frac{(a_1)_m\cdots(a_p)_m}{(b_1)_m\cdots(b_q)_m m!}\frac{(N+1-m)_m}{(Nz^{-1}-m)_m}. 
\end{align*}
By setting $n=l+m$ and changing the order of summation, we see that this equals 
\begin{align*}
&\sum_{n=0}^{N}\frac{(1+N-n)_{n}}{(a+n)_{N-n}(b-a+N-n)_{n}}
\sum_{m=0}^{n}\frac{(a_1)_m\cdots(a_p)_m}{(b_1)_m\cdots(b_q)_m m!}
\frac{(1+n-m)_{N-n+m}}{(Nz^{-1}-m)_m}\\
&=\sum_{n=0}^{N}\frac{(1+n)_{N-n}}{(a+n)_{N-n}}\frac{(1+N-n)_{n}}{(b-a+N-n)_{n}}
\sum_{m=0}^{n}\frac{(a_1)_m\cdots(a_p)_m}{(b_1)_m\cdots(b_q)_m m!}
\frac{(n+1-m)_{m}}{(n(\frac{n}{N}z)^{-1}-m)_m}\\
&=\sum_{n=0}^{N}\frac{(1+n)_{N-n}}{(a+n)_{N-n}}\frac{(1+N-n)_{n}}{(b-a+N-n)_{n}}
\trF{p}{q}{[n]}\biggl(\varray{a_1,\ldots,a_p\\ b_1,\ldots,b_q};\frac{n}{N}z\biggr). 
\end{align*}
Thus the assertion is proved. 
\end{proof}

In particular, we obtain the following: 

\begin{thm}\label{thm:tHG int}
\begin{equation}\label{eq:tHG int}
\begin{split}
\trF{2}{1}{[N]}&\biggl(\varray{a,b\\ c};z\biggr)\\
&=\frac{(a)_{N} (c-a)_{N}}{(c)_{N} N!}
\sum_{n=0}^{N}\frac{(1+n)_{N-n}}{(a+n)_{N-n}}\frac{(1+N-n)_{n}}{(c-a+N-n)_{n}}
\frac{(b+Nz^{-1}-n)_{n}}{(Nz^{-1}-n)_{n}}. 
\end{split}
\end{equation}
\end{thm}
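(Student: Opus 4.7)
The statement is the specialization of Theorem \ref{thm:gen tHG int} to the case $p=1$, $q=0$, combined with the evaluation of $\trF{1}{0}{[n]}$ provided by Proposition \ref{prop:t1F0}. So my plan is simply to carry out this two-step reduction.

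First, I would apply Theorem \ref{thm:gen tHG int} with $p=1$, $q=0$, taking the roles $(a,b)$ there to be our $(a,c)$ and taking $a_1$ there to be our $b$. The left-hand side then becomes $\trF{2}{1}{[N]}\bigl(\begin{smallmatrix}a,b\\ c\end{smallmatrix};z\bigr)$ exactly as required, and the formula reads
\[
\trF{2}{1}{[N]}\!\biggl(\varray{a,b\\ c};z\biggr)
=\frac{(a)_N(c-a)_N}{(c)_N N!}\sum_{n=0}^N
\frac{(1+n)_{N-n}}{(a+n)_{N-n}}\frac{(1+N-n)_n}{(c-a+N-n)_n}\,
\trF{1}{0}{[n]}\!\biggl(\varray{b\\ -};\tfrac{n}{N}z\biggr).
\]

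Second, I would use Proposition \ref{prop:t1F0} to evaluate the inner truncated hypergeometric function. With argument $z' = \frac{n}{N}z$, the proposition gives
\[
\trF{1}{0}{[n]}\!\biggl(\varray{b\\ -};z'\biggr)
=\frac{(b+n{z'}^{-1}-n)_n}{(n{z'}^{-1}-n)_n}.
\]
The key computational point (such as it is) is the identity $n{z'}^{-1}=n\cdot\frac{N}{nz}=Nz^{-1}$, so the inner factor simplifies cleanly to $\frac{(b+Nz^{-1}-n)_n}{(Nz^{-1}-n)_n}$. Substituting this into the sum yields the claimed formula.

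There is essentially no obstacle: both ingredients are already established in the preceding text, and the only care needed is the bookkeeping around the substitution $z'=\tfrac{n}{N}z$, which is what makes the $n$'s cancel and leaves a clean $Nz^{-1}$ in the final expression. Consequently the proof reduces to a one-line invocation of Theorem \ref{thm:gen tHG int} followed by a one-line invocation of Proposition \ref{prop:t1F0}.
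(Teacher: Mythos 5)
Your proposal is correct and is exactly the paper's proof: the paper derives Theorem \ref{thm:tHG int} by combining Theorem \ref{thm:gen tHG int} (with $p=1$, $q=0$) with Proposition \ref{prop:t1F0}, and the substitution $n(\tfrac{n}{N}z)^{-1}=Nz^{-1}$ that you highlight is precisely the bookkeeping needed.
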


\begin{proof}
This follows from Theorem \ref{thm:gen tHG int} and Proposition \ref{prop:t1F0}. 
\end{proof}

\begin{rem}
Theorem \ref{thm:tHG int} is written as 
\begin{align*}
{}_3F_2&\biggl(\varray{a,b,-N\\ c,1-Nz^{-1}};1\biggr)\\
&=\frac{(a)_{N} (c-a)_{N}}{(c)_{N} N!}
\sum_{n=0}^{N}\frac{(1+n)_{N-n}}{(a+n)_{N-n}}\frac{(1+N-n)_{n}}{(c-a+N-n)_{n}}
\frac{(b+Nz^{-1}-n)_{n}}{(Nz^{-1}-n)_{n}}. 
\intertext{The right-hand side can be further rewritten as follows: }
&=\frac{(c-a)_{N}}{(c)_{N}}
\sum_{n=0}^{N}\frac{(a)_{n}}{n!}\frac{(-N)_{n}}{(1+a-c-N)_{n}}\frac{(1-Nz^{-1}-b)_{n}}{(1-Nz^{-1})_{n}}\\
&=\frac{(c-a)_{N}}{(c)_{N}}{}_3F_2\biggl(\varray{a,1-Nz^{-1}-b,-N\\ 1+a-c-N,1-Nz^{-1}};1\biggr). 
\end{align*}
This coincides with the following equation (up to substitution of letters)
\begin{equation}\label{eq:3F2 transformation}
{}_3F_2\biggl(\varray{a,b,-N\\ d,e};1\biggr)
=\frac{(e-a)_N}{(e)_N}{}_3F_2\biggl(\varray{a,d-b,-N\\ d,a+1-N-e};1\biggr). 
\end{equation}
This transformation formula \eqref{eq:3F2 transformation} appears 
in the proof of \cite[Corollary 3.3.4]{AAR}, 
and also follows from \cite[Theorem 2.4.4]{AAR} by letting $a\to -N$.  
\end{rem}

Considering the vast amount of research on hypergeometric functions, 
it is not surprising that Theorem \ref{thm:gen tHG int} is already known, 
though the author could not find it in the literature. 
Here we write it explicitly in terms of the terminating hypergeometric series 
(with certain relabeling of letters): 
\begin{multline*}
{}_{p+1}F_{q}\biggl(\varray{a_1,\ldots,a_p,-N\\ b_1,\ldots,b_q};1\biggr)\\
=\frac{(a_p)_N(b_q-a_p)_N}{(b_q)_N N!}
\sum_{n=0}^N\frac{(1+n)_{N-n}}{(a_p+n)_{N-n}}\frac{(1+N-n)_n}{(b_q-a_p+N-n)_n}\\
\cdot{}_{p}F_{q-1}\biggl(\varray{a_1,\ldots,a_{p-1},-n\\ b_1,\ldots,b_{q-1}};1\biggr). 
\end{multline*}
After some simple manipulation, it is also written as 
\begin{equation}\label{eq:gen tHG int term}
\begin{split}
{}_{p+1}F_{q}&\biggl(\varray{a_1,\ldots,a_p,-N\\ b_1,\ldots,b_q};1\biggr)\\
&=\sum_{n=0}^N\binom{N}{n}\frac{(a_p)_n(b_q-a_p)_{N-n}}{(b_q)_N}
{}_{p}F_{q-1}\biggl(\varray{a_1,\ldots,a_{p-1},-n\\ b_1,\ldots,b_{q-1}};1\biggr). 
\end{split}
\end{equation}
We also record the following formula, which may be interesting in its own right: 

\begin{cor}
\begin{align*}
{}_{p+1}F_p&\biggl(\varray{a_1,\ldots,a_p,-N\\ b_1,\ldots,b_p};1\biggr)\\
&=\sum_{0=n_0\le n_1\le\cdots\le n_{p-1}\le n_p=N}
\prod_{j=1}^p\binom{n_j}{n_{j-1}}\frac{(a_j)_{n_{j-1}}(b_j-a_j)_{n_j-n_{j-1}}}{(b_j)_{n_j}}. 
\end{align*}
\end{cor}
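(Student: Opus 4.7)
The plan is to prove the corollary by induction on $p$, using formula \eqref{eq:gen tHG int term} as the single engine. Base case $p=1$: the range of summation forces $n_0=0$ and $n_1=N$, so the right-hand side collapses to $\binom{N}{0}(a_1)_0(b_1-a_1)_N/(b_1)_N = (b_1-a_1)_N/(b_1)_N$, which is the classical Chu–Vandermonde evaluation of ${}_2F_1(a_1,-N;b_1;1)$ (equivalently, Proposition \ref{prop:t1F0}).

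For the inductive step, I would apply \eqref{eq:gen tHG int term} to ${}_{p+1}F_p$ with $q=p$, obtaining
\[
{}_{p+1}F_{p}\biggl(\varray{a_1,\ldots,a_p,-N\\ b_1,\ldots,b_p};1\biggr)
=\sum_{n_{p-1}=0}^N\binom{N}{n_{p-1}}\frac{(a_p)_{n_{p-1}}(b_p-a_p)_{N-n_{p-1}}}{(b_p)_N}
{}_{p}F_{p-1}\biggl(\varray{a_1,\ldots,a_{p-1},-n_{p-1}\\ b_1,\ldots,b_{p-1}};1\biggr).
\]
Then I would invoke the induction hypothesis on the inner ${}_pF_{p-1}$ (with $N$ replaced by $n_{p-1}$ and $p$ by $p-1$) to rewrite it as the $(p-1)$-fold nested sum over $0=n_0\le n_1\le\cdots\le n_{p-1}$.

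Substituting and setting $n_p=N$, the outer factor $\binom{N}{n_{p-1}}(a_p)_{n_{p-1}}(b_p-a_p)_{N-n_{p-1}}/(b_p)_N$ matches exactly the $j=p$ factor $\binom{n_p}{n_{p-1}}(a_p)_{n_{p-1}}(b_p-a_p)_{n_p-n_{p-1}}/(b_p)_{n_p}$ of the product in the claimed formula, and the nested sum ranges fuse into $0=n_0\le n_1\le\cdots\le n_{p-1}\le n_p=N$. This yields the desired identity.

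There is essentially no obstacle: the factor produced by one application of \eqref{eq:gen tHG int term} is already in the exact shape of the $j=p$ term of the product, so the induction is just bookkeeping. The only thing to be careful about is the convention at the boundary $n_0=0$, which ensures the $j=1$ factor $\binom{n_1}{0}(a_1)_0(b_1-a_1)_{n_1}/(b_1)_{n_1}$ is consistent with the Chu–Vandermonde base case applied at the innermost level.
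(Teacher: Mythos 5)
Your proof is correct and matches the paper's approach: the paper's proof is precisely ``use equation \eqref{eq:gen tHG int term} inductively,'' and your write-up just spells out that induction, with the Chu--Vandermonde evaluation handling the base case $p=1$.
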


\begin{proof}
It follows by using the equation \eqref{eq:gen tHG int term} inductively. 
\end{proof}

\section{Applications of Theorem \ref{thm:tHG int}}
By using the integral representation \eqref{eq:HG int} of the hypergeometric function, 
one can deduce the following results: 
\begin{itemize}
\item The hypergeometric theorem of Gauss: 
\[{}_2F_1\biggl(\varray{a,b\\ c};1\biggr)=\frac{\Gamma(c)\Gamma(c-a-b)}{\Gamma(c-a)\Gamma(c-b)}. \]
\item The transformation formulas of Pfaff and Euler: 
\begin{align*}
{}_2F_1\biggl(\varray{a,b\\ c};z\biggr)
&=(1-z)^{-b}{}_2F_1\biggl(\varray{c-a,b\\ c};\frac{z}{z-1}\biggr)\\
&=(1-z)^{c-a-b}{}_2F_1\biggl(\varray{c-a,c-b\\ c};z\biggr). 
\end{align*}
\end{itemize}
Now we shall give the finite analogues of them. 

\begin{cor}\label{cor:tHG theorem}
\[\trF{2}{1}{[N]}\biggl(\varray{a,b\\ c};\frac{N}{N+c-a-b}\bigg)
=\frac{(c-a)_{N}(c-b)_{N}}{(c)_{N}(c-a-b)_{N}}. \]
\end{cor}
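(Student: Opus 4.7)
The plan is to specialize Theorem \ref{thm:tHG int} at $z = N/(N+c-a-b)$, which makes $Nz^{-1} = N+c-a-b$, and to reduce the resulting sum to the Chu-Vandermonde identity \eqref{eq:binomial}.

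First I would substitute this value of $Nz^{-1}$ into the right-hand side of \eqref{eq:tHG int}. The two Pochhammer factors depending on $b$ collapse to
\[(b+Nz^{-1}-n)_n = (N+c-a-n)_n,\qquad (Nz^{-1}-n)_n = (N+c-a-b-n)_n.\]
The key observation is that $(N+c-a-n)_n$ and $(c-a+N-n)_n$ are literally the same product, both being the product of the $n$ consecutive integers $c-a+N-n,\ldots,c-a+N-1$. Hence the numerator factor $(b+Nz^{-1}-n)_n$ cancels exactly the denominator factor $(c-a+N-n)_n$ in \eqref{eq:tHG int}. This cancellation is the heart of the argument, and recognizing it (or equivalently, seeing why $z=N/(N+c-a-b)$ is the right specialization) is the main conceptual step.

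After the cancellation, the general term reduces via $(1+n)_{N-n}=N!/n!$, $(1+N-n)_n=N!/(N-n)!$, $(a+n)_{N-n}=(a)_N/(a)_n$, and $(N+c-a-b-n)_n=(c-a-b)_N/(c-a-b)_{N-n}$, so the sum becomes
\[\frac{(N!)^2}{(a)_N(c-a-b)_N}\sum_{n=0}^N\frac{(a)_n}{n!}\,\frac{(c-a-b)_{N-n}}{(N-n)!}.\]
The Chu-Vandermonde identity \eqref{eq:binomial}, applied with parameter pair $(a,c-a-b)$ whose sum is $c-b$, evaluates the remaining sum to $(c-b)_N/N!$. Hence the full sum equals $N!(c-b)_N/\bigl((a)_N(c-a-b)_N\bigr)$, and multiplying by the prefactor $(a)_N(c-a)_N/\bigl((c)_N N!\bigr)$ from \eqref{eq:tHG int} yields
\[\frac{(a)_N(c-a)_N}{(c)_N N!}\cdot\frac{N!(c-b)_N}{(a)_N(c-a-b)_N}
=\frac{(c-a)_N(c-b)_N}{(c)_N(c-a-b)_N},\]
as claimed. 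An alternative route would be to recognize via \eqref{eq:terminating 3F2} that the specialized truncated ${}_2F_1$ is a balanced ${}_3F_2(1)$ and invoke the corresponding classical summation theorem, but the derivation via Theorem \ref{thm:tHG int} stays within the framework of the present paper.
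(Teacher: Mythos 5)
Your proposal is correct and follows essentially the same route as the paper: specialize Theorem \ref{thm:tHG int} at $Nz^{-1}=N+c-a-b$, cancel the factor $(c-a+N-n)_n$, and evaluate the remaining sum by the Chu--Vandermonde identity. The only cosmetic difference is that the paper cites Proposition \ref{prop:disc beta} for this last step, whereas you unfold its proof directly from \eqref{eq:binomial}.
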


\begin{proof}
By Theorem \ref{thm:tHG int} and Proposition \ref{prop:disc beta}, 
one sees that the left-hand side is equal to 
\begin{align*}
&\frac{(a)_{N}(c-a)_{N}}{(c)_{N} N!}
\sum_{n=0}^{N}\frac{(1+n)_{N-n}}{(a+n)_{N-n}}
\frac{(1+N-n)_{n}}{(c-a+N-n)_{n}}\frac{(c-a+N-n)_n}{(c-a-b+N-n)_n}\\
&=\frac{(a)_{N}(c-a)_{N}}{(c)_{N} N!}
\sum_{n=0}^{N}\frac{(1+n)_{N-n}}{(a+n)_{N-n}}\frac{(1+N-n)_{n}}{(c-a-b+N-n)_n}\\
&=\frac{(a)_{N}(c-a)_{N}}{(c)_{N} N!}
\frac{(c-b)_{N} N!}{(a)_{N}(c-a-b)_{N}}.  
\end{align*}
This is equal to the right-hand side. 
\end{proof}

\begin{rem}
Corollary \ref{cor:tHG theorem}, written as 
\[{}_3F_2\biggl(\varray{a,b,-N\\ c,1-N+a+b-c};1\biggr)
=\frac{(c-a)_N(c-b)_N}{(c)_N(c-a-b)_N}, \]
is exactly the Pfaff-Saalsch\"{u}tz theorem (cf.~\cite[Theorem 2.2.6]{AAR}). 
\end{rem}

\begin{cor}\label{cor:tHG Pfaff Euler}
\begin{align*}
\trF{2}{1}{[N]}\biggl(\varray{a,b\\ c};z\biggr)
&=\frac{(b+Nz^{-1}-N)_{N}}{(Nz^{-1}-N)_{N}}
\trF{2}{1}{[N]}\biggl(\varray{c-a,b\\ c};\frac{N}{N-Nz^{-1}+1-b}\biggr) \\
&=\frac{(a+b-c+Nz^{-1}-N)_{N}}{(Nz^{-1}-N)_{N}}
\trF{2}{1}{[N]}\biggl(\varray{c-a,c-b\\ c};\frac{N}{Nz^{-1}+a+b-c}\biggr). 
\end{align*}
\end{cor}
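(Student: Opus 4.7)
The plan is to derive the first (Pfaff) identity directly from the discretized integral representation in Theorem \ref{thm:tHG int} by substituting $n\mapsto N-n$ in the sum, and then to obtain the second (Euler) identity by iterating the first, exploiting the symmetry of $\trF{2}{1}{[N]}$ in its upper parameters.

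For Pfaff, I would start from the sum side of Theorem \ref{thm:tHG int} for $\trF{2}{1}{[N]}(a,b;c;z)$ and reverse the summation via $n\mapsto N-n$. After this change, the first two factors $\frac{(1+n)_{N-n}}{(a+n)_{N-n}}$ and $\frac{(1+N-n)_n}{(c-a+N-n)_n}$ swap into exactly the factors one would get for $\trF{2}{1}{[N]}(c-a,b;c;\cdot)$ (with $a$ and $c-a$ interchanged), while the third factor becomes $(b+Nz^{-1}-N+n)_{N-n}/(Nz^{-1}-N+n)_{N-n}$. Applying $(x)_{N-n}=(x)_N/(x)_n$ with $x=b+Nz^{-1}-N$ and $x=Nz^{-1}-N$ peels off the desired constant $(b+Nz^{-1}-N)_N/(Nz^{-1}-N)_N$ and leaves $(Nz^{-1}-N)_n/(b+Nz^{-1}-N)_n$. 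Setting $N(z')^{-1}=N-Nz^{-1}+1-b$ (i.e.\ $z'=N/(N-Nz^{-1}+1-b)$), a short calculation using the reflection identity $(1-y-n)_n=(-1)^n(y)_n$ then shows
\[(Nz^{-1}-N)_n/(b+Nz^{-1}-N)_n = (b+N(z')^{-1}-n)_n/(N(z')^{-1}-n)_n.\]
What remains is precisely the sum representation of $\trF{2}{1}{[N]}(c-a,b;c;z')$ given by Theorem \ref{thm:tHG int}, yielding the first identity.

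For Euler, I would apply Pfaff once to move from $(a,b;c;z)$ to $(c-a,b;c;z')$, then use the symmetry $\trF{2}{1}{[N]}(c-a,b;c;z')=\trF{2}{1}{[N]}(b,c-a;c;z')$ in the upper arguments and apply Pfaff a second time (now sending $b$ to $c-b$) to reach $\trF{2}{1}{[N]}(c-b,c-a;c;z'')=\trF{2}{1}{[N]}(c-a,c-b;c;z'')$. Tracking the argument gives $N(z'')^{-1}=N-N(z')^{-1}+1-(c-a)=Nz^{-1}+a+b-c$, matching the statement. The product of the two prefactors $\frac{(b+Nz^{-1}-N)_N}{(Nz^{-1}-N)_N}\cdot\frac{(c-a+N(z')^{-1}-N)_N}{(N(z')^{-1}-N)_N}$ collapses to $\frac{(a+b-c+Nz^{-1}-N)_N}{(Nz^{-1}-N)_N}$ once the second factor is rewritten via the reflection $(1-y)_N=(-1)^N(y-N)_N$ applied with $y=a+b+Nz^{-1}-c$ and with $y=b+Nz^{-1}$, causing the intermediate $(b+Nz^{-1}-N)_N$ factors to cancel.

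The main obstacle is purely computational bookkeeping: keeping index shifts and signs in the various rising factorials consistent through the two identities $(x)_{N-n}=(x)_N/(x)_n$ and $(1-y-n)_n=(-1)^n(y)_n$. There is no conceptual difficulty beyond careful manipulation of Pochhammer symbols.
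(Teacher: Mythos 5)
Your proposal is correct and follows essentially the same route as the paper: the paper also proves the first identity by reversing the summation index ($k=N-n$) in the representation of Theorem \ref{thm:tHG int}, peeling off $(b+Nz^{-1}-N)_N/(Nz^{-1}-N)_N$ via $(x+k)_{N-k}=(x)_N/(x)_k$, and using the reflection of Pochhammer symbols to recognize the new argument $N/(N-Nz^{-1}+1-b)$; it then obtains the second identity by applying the first one again with the upper parameters swapped and collapsing the product of prefactors exactly as you describe.
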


\begin{proof}
Put $w=Nz^{-1}$. By setting $k=N-n$ in the expression \eqref{eq:tHG int}, we have 
\begin{align*}
\trF{2}{1}{[N]}\biggl(\varray{a,b\\ c};z\biggr)
=\frac{(a)_{N} (c-a)_{N}}{(c)_{N} N!}
\sum_{k=0}^{N}\frac{(1+N-k)_{k}}{(a+N-k)_{k}}&\frac{(1+k)_{N-k}}{(c-a+k)_{N-k}}\\
&\cdot\frac{(b+w-N+k)_{N-k}}{(w-N+k)_{N-k}}. 
\end{align*}
Moreover, since 
\begin{align*}
\frac{(b+w-N+k)_{N-k}}{(w-N+k)_{N-k}}
&=\frac{(b+w-N)_{N}}{(w-N)_{N}}\frac{(w-N)_{k}}{(b+w-N)_{k}}\\
&=\frac{(b+w-N)_{N}}{(w-N)_{N}}\frac{(N-w-k+1)_{k}}{(N-w-b-k+1)_{k}}\\
&=\frac{(b+w-N)_{N}}{(w-N)_{N}}\frac{(b+(N-w+1-b)-k)_{k}}{((N-w+1-b)-k)_{k}}, 
\end{align*}
we obtain 
\begin{align*}
\trF{2}{1}{[N]}&\biggl(\varray{a,b\\ c};z\biggr)\\
&=\frac{(b+w-N)_{N}}{(w-N)_{N}}\frac{(a)_{N} (c-a)_{N}}{(c)_{N} N!}\\
&\quad\cdot\sum_{k=0}^{N-1}\frac{(1+N-k)_{k}}{(a+N-k)_{k}}\frac{(1+k)_{N-k}}{(c-a+k)_{N-k}}
\frac{(b+(N-w+1-b)-k)_{k}}{((N-w+1-b)-k)_{k}}\\
&=\frac{(b+w-N)_{N}}{(w-N)_{N}}
\trF{2}{1}{[N]}\biggl(\varray{c-a,b\\ c};\frac{N}{N-w+1-b}\biggr). 
\end{align*}
Here we have used \eqref{eq:tHG int} again. Thus the first formula is proved. 

The first formula implies that 
\begin{align*}
\trF{2}{1}{[N]}&\biggl(\varray{c-a,b\\ c};\frac{N}{N-w+1-b}\biggr)\\
&=\frac{(c-a+(N-w+1-b)-N)_{N}}{((N-w+1-b)-N)_{N}}\\
&\quad\cdot\trF{2}{1}{[N]}\biggl(\varray{c-a,c-b\\ c};
\frac{N}{N-(N-w+1-b)+1-(c-a)}\biggr)\\
&=\frac{(a+b-c+w-N)_{N}}{(b+w-N)_{N}}
\trF{2}{1}{[N]}\biggl(\varray{c-a,c-b\\ c};\frac{N}{w+a+b-c}\biggr). 
\end{align*}
By substituting this to the first formula, we prove the second one. 
\end{proof}

\begin{rem}
The first formula of Corollary \ref{cor:tHG Pfaff Euler} can be written as 
\[{}_3F_2\biggl(\varray{a,b,-N\\ c,1-Nz^{-1}};1\biggr)
=\frac{(1-Nz^{-1}-b)_N}{(1-Nz^{-1})_N}{}_3F_2\biggl(\varray{c-a,b,-N\\ c,b+Nz^{-1}-N};1\biggr). \]
This is also equivalent to \eqref{eq:3F2 transformation}. 
It looks interesting that the finite analogues of 
the integral representation of ${}_2F_1$ and Pfaff's transformation are equivalent to 
the same equation \eqref{eq:3F2 transformation}. 
\end{rem}

\section{Ohno-Zagier type formula}
Let us recall the formula obtained by Ohno and Zagier \cite{OZ}. 
For integers $k,r,h>0$, let $I_0(k,r,h)$ denote the set of 
admissible indices of weight $k$, depth $r$ and height $h$, that is, 
\[I_0(k,r,h)\coloneqq \biggl\{(k_1,\ldots,k_r)\in\Z_{>0}^r\biggm| 
\begin{array}{l} 
k_r>1, k_1+\cdots+k_r=k, \\
\#\{i\mid k_i>1\}=h
\end{array}\biggr\}. \]
Note that $I_0(k,r,h)$ is empty unless $k\ge r+h$ and $r\ge h$. 
For any $\bk=(k_1,\ldots,k_r)\in \Z_{>0}^r$, we define the multiple polylogarithm by 
\[\Li_\bk(z)\coloneqq\sum_{0<m_1<\cdots<m_r}\frac{z^{m_r}}{m_1^{k_1}\cdots m_r^{k_r}}. \]
Then we consider the sum of the multiple polylogarithms of given weight, depth and height 
\[G_0(k,r,h;z)\coloneqq \sum_{\bk\in I_0(k,r,h)}\Li_\bk(z), \]
and their generating function 
\[\Phi_0(X,Y,Z;z)\coloneqq \sum_{k,r,h>0}G_0(k,r,h;z)X^{k-r-h}Y^{r-h}Z^{h-1}. \]

\begin{thm}[{\cite[p.~485]{OZ}}]
We have 
\begin{equation}\label{eq:OZ}
\Phi_0(X,Y,Z;z)=\frac{1}{Z-XY}\biggl\{{}_2F_1\biggl(\varray{\alpha-X,\beta-X\\ 1-X};z\biggr)-1\biggr\}, 
\end{equation}
where $\alpha$ and $\beta$ satisfy the relations 
\[\alpha+\beta=X+Y,\quad \alpha\beta=Z. \]
\end{thm}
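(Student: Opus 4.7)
The plan is to derive a second-order linear ODE in $z$ for $\Phi_0(X,Y,Z;z)$ which turns out to be the hypergeometric equation with parameters $\alpha - X$, $\beta - X$, $1 - X$, and then to pin down the solution from its initial value at $z = 0$.

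First I would introduce an auxiliary generating function $\Phi_1(X,Y,Z;z)$ encoding the non-admissible indices ending in $1$: set
\[ G_1(k,r,h;z) = \sum_{\bk} \Li_\bk(z), \]
where the sum ranges over $\bk = (k_1,\ldots,k_r)$ of weight $k$, depth $r$, with $k_r = 1$ and $h = \#\{i : k_i > 1\}$ (with the convention $\Li_\emptyset = 1$ handling the boundary case $r=1$), and take $\Phi_1$ to be the corresponding generating series. The two derivative rules
\[ z\tfrac{d}{dz}\Li_\bk(z) = \Li_{(k_1,\ldots,k_r-1)}(z) \ \ (k_r \geq 2), \qquad (1-z)\tfrac{d}{dz}\Li_\bk(z) = \Li_{(k_1,\ldots,k_{r-1})}(z) \ \ (k_r = 1) \]
act cleanly on the admissible and non-admissible parts respectively, so summing over each family and tracking how decrementing $k_r$ or dropping the last entry affects the triple $(k,r,h)$ yields a coupled pair of first-order ODEs in $z$ linking $\Phi_0$ and $\Phi_1$.

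Schematically, $z\frac{d}{dz}\Phi_0$ decomposes into a contribution from $\Phi_0$ itself (the case $k_r \geq 3$, where decrementing preserves admissibility and corresponds to division by $X$) plus a contribution from $\Phi_1$ (the case $k_r = 2$, where the height drops by one, producing a factor of $Z$ with appropriate parameter shifts), while $(1-z)\frac{d}{dz}\Phi_1$ is expressed in terms of $\Phi_0$ plus a constant term coming from the $r = 1$ boundary. Eliminating $\Phi_1$ gives a second-order ODE for $\Phi_0$. After substituting $\Psi = (Z - XY)\Phi_0 + 1$ and using the identities $(\alpha - X) + (\beta - X) = Y - X$ and $(\alpha - X)(\beta - X) = Z - XY$, which follow from $\alpha+\beta = X+Y$, $\alpha\beta = Z$, the equation collapses to the hypergeometric equation
\[ z(1-z)\Psi'' + \bigl[(1-X) - \bigl((\alpha-X)+(\beta-X)+1\bigr)\,z\bigr]\Psi' - (\alpha-X)(\beta-X)\Psi = 0. \]

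Every $\Li_\bk(z)$ vanishes at $z = 0$, so $\Phi_0(X,Y,Z;0) = 0$ and hence $\Psi(0) = 1 = {}_2F_1(\alpha - X, \beta - X; 1 - X; 0)$. The hypergeometric equation has indicial exponents $0$ and $X$ at $z = 0$, so the unique solution holomorphic at the origin with value $1$ there is ${}_2F_1(\alpha - X, \beta - X; 1 - X; z)$, which gives the claimed formula. The main obstacle is the combinatorial bookkeeping needed to pass from the per-index derivative formulas to the coupled ODE system while keeping the exponents of $X$, $Y$, $Z$ correctly aligned; in particular, the transitional case $k_r = 2$, where a single decrement simultaneously lowers the weight and the height, is precisely what couples $\Phi_0$ and $\Phi_1$ and, after elimination, produces the quadratic $(\alpha - X)(\beta - X) = Z - XY$ that forces the specific hypergeometric parameters on the right-hand side.
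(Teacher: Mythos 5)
Your outline is correct, but it is not the paper's route: what you describe is essentially the original Ohno--Zagier argument via the hypergeometric differential equation. Your two derivative rules are right, the coupled first-order system linking $\Phi_0$ to the auxiliary series $\Phi_1$ of indices ending in $1$ does close up (with the $k_r=2$ case coupling the two exactly as you say), elimination gives the hypergeometric equation for $\Psi=1+(Z-XY)\Phi_0$, and $\Psi(0)=1$ together with formal uniqueness of the power-series solution (the indicial roots $0$ and $X$ are formally distinct) forces $\Psi={}_2F_1(\alpha-X,\beta-X;1-X;z)$. The paper instead gives a direct, purely algebraic proof with no differential equation: it rewrites $\Phi_0$ as a sum over the largest index $n$ of a product of local factors, $\sum_{n>0}\prod_{0<m<n}\bigl(1+\tfrac{Y}{m}+\tfrac{Z}{m^2}+\tfrac{XZ}{m^3}+\cdots\bigr)\cdot\bigl(\tfrac{1}{n^2}+\tfrac{X}{n^3}+\cdots\bigr)z^n$, recognizes each factor as $\tfrac{(m+\alpha-X)(m+\beta-X)}{m(m-X)}$ and $\tfrac{1}{n(n-X)}$ respectively, and multiplies out to land directly on the Pochhammer coefficients of the ${}_2F_1$. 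The comparison is instructive: your method requires the $\Phi_0$/$\Phi_1$ bookkeeping that you yourself flag as the main obstacle (and which you should write out explicitly, including the constant term from the depth-one boundary of $\Phi_1$, before asserting that the elimination collapses as claimed), whereas the paper's computation is a one-page summation identity; more importantly, the paper's argument survives the replacement of $z^m$ by $(N-m)_m/(Nz^{-1}-m)_m$ and thereby yields the truncated analogue \eqref{eq:tOZ}, for which the ODE method is unavailable since the modified summand no longer satisfies the differential recursions you use. Your approach proves only the classical statement, though it is a valid proof of it.
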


Now we consider the truncation. For $N>0$, define  
\begin{equation}\label{eq:Phi_0^(N) def}
\begin{split}
\Li_\bk^{(N)}(z)&\coloneqq\sum_{0<m_1<\cdots<m_r<N}\frac{1}{m_1^{k_1}\cdots m_r^{k_r}}
\frac{(N-m_r)_{m_r}}{(Nz^{-1}-m_r)_{m_r}}, \\
G_0^{(N)}(k,r,h;z)&\coloneqq \sum_{\bk\in I_0(k,r,h)}\Li^{(N)}_\bk(z), \\
\Phi_0^{(N)}(X,Y,Z;z)&\coloneqq \sum_{k,r,h>0}G^{(N)}_0(k,r,h;z)X^{k-r-h}Y^{r-h}Z^{h-1}. 
\end{split}
\end{equation}
Note that $\Li_\bk^{(N)}(z)$ is obtained by setting $x_1=\cdots=x_r=z^{-1}$ in \eqref{eq:HMS}. 
As mentioned in Section 1, we also consider the truncated hypergeometric function 
\[\trF{2}{1}{(N)}\biggl(\varray{a,b\\ c};z\biggr)
\coloneqq \sum_{m=0}^{N-1}\frac{(a)_m(b)_m}{(c)_m m!}\frac{(N-m)_m}{(Nz^{-1}-m)_m}
=\trF{2}{1}{[N-1]}\biggl(\varray{a,b\\ c};\frac{N-1}{N}z\biggr). \]

\begin{thm}\label{thm:tOZ}
We have 
\begin{equation}\label{eq:tOZ}
\Phi_0^{(N)}(X,Y,Z;z)=\frac{1}{Z-XY}
\biggl\{\trF{2}{1}{(N)}\biggl(\varray{\alpha-X,\beta-X\\ 1-X};z\biggr)-1\biggr\}, 
\end{equation}
where $\alpha$ and $\beta$ satisfy the relations 
\[\alpha+\beta=X+Y,\quad \alpha\beta=Z. \]
\end{thm}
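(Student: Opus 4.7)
The plan is to evaluate $\Phi_0^{(N)}$ by swapping the order of summation in its defining series: fix the positions $m_1<\cdots<m_r$ first, and sum over admissible indices $\bk$ as a per-block geometric series. The key algebraic step will be the factorization $j^2+(Y-X)j+(Z-XY)=(j+\alpha-X)(j+\beta-X)$, which is forced by the defining relations $\alpha+\beta=X+Y$ and $\alpha\beta=Z$.

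Substituting the series for $\Li_\bk^{(N)}(z)$ into $\Phi_0^{(N)}$ and collecting the weight $X^{k-r-h}Y^{r-h}Z^{h-1}$ block-by-block --- a size-$1$ block contributes $Y$, a size-$j\ge 2$ block contributes $ZX^{j-2}$, with an overall factor of $Z^{-1}$ --- gives
\[
Z\Phi_0^{(N)} = \sum_{r\ge 1}\sum_{0<m_1<\cdots<m_r<N}\frac{(N-m_r)_{m_r}}{(Nz^{-1}-m_r)_{m_r}}\sum_{\substack{k_1,\ldots,k_r\ge 1 \\ k_r\ge 2}}\prod_{i=1}^r\frac{w(k_i)}{m_i^{k_i}}
\]
with $w(1)=Y$ and $w(j)=ZX^{j-2}$ for $j\ge 2$. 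The inner geometric sums evaluate to $(Y(m_i-X)+Z)/(m_i(m_i-X))$ for $i<r$ and $Z/(m_r(m_r-X))$ for $i=r$. Isolating $m=m_r$ and summing the prefix $r\ge 1$, $m_1<\cdots<m_{r-1}<m$, telescopes into $\prod_{j=1}^{m-1}(1+f(j))$, where $f(j):=(Y(j-X)+Z)/(j(j-X))$.

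The key observation $1+f(j)=(j+\alpha-X)(j+\beta-X)/(j(j-X))$ then turns this product into the Pochhammer ratio $(\alpha-X+1)_{m-1}(\beta-X+1)_{m-1}/((m-1)!\,(1-X)_{m-1})$. Applying the routine identities $(\alpha-X+1)_{m-1}(\beta-X+1)_{m-1}=(\alpha-X)_m(\beta-X)_m/(Z-XY)$, $(m-X)(1-X)_{m-1}=(1-X)_m$, and $m(m-1)!=m!$ collapses the computation to
\[
Z\Phi_0^{(N)} = \frac{Z}{Z-XY}\sum_{m=1}^{N-1}\frac{(\alpha-X)_m(\beta-X)_m}{(1-X)_m\,m!}\cdot\frac{(N-m)_m}{(Nz^{-1}-m)_m},
\]
which equals $\frac{Z}{Z-XY}\bigl[\trF{2}{1}{(N)}\bigl(\varray{\alpha-X,\beta-X\\1-X};z\bigr)-1\bigr]$ since the $m=0$ term of the truncated hypergeometric series equals $1$. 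Dividing by $Z$ yields the claim. The only non-routine step is the quadratic factorization, which is made natural by the very definition of $\alpha,\beta$; everything else is Pochhammer bookkeeping. Notably, this approach avoids the hypergeometric differential equation used in the original proof of the Ohno-Zagier formula.
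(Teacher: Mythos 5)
Your proof is correct and is essentially the paper's own argument: your ``fix the positions, sum each block as a geometric series, and telescope over prefixes'' step is exactly the paper's identity \eqref{eq:Phi_0^(N)} (where the product $\prod_{0<m<n}(1+\tfrac{Y}{m}+\tfrac{Z}{m^2}+\tfrac{XZ}{m^3}+\cdots)$ packages your per-block weights and subset sum), and the quadratic factorization $j^2+(Y-X)j+(Z-XY)=(j+\alpha-X)(j+\beta-X)$ together with the Pochhammer bookkeeping and the use of $(\alpha-X)(\beta-X)=Z-XY$ coincide with the paper's computation. The paper likewise notes that this argument bypasses the hypergeometric differential equation, so there is nothing to add.
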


This finite analogue of the Ohno-Zagier formula is indeed a \emph{consequence} 
of the original formula \eqref{eq:OZ}; 
since \eqref{eq:OZ} is an identity between power series in $z$, 
which is equivalent to coefficientwise identities, 
it remains valid if $z^m$ is replaced by $(N-m)_m/(Nz^{-1}-m)_m$. 

In the following, we give a direct proof of \eqref{eq:tOZ}. 
The same argument also provides another proof of the original \eqref{eq:OZ}, 
which does not use the differential equation. 

\begin{proof}
First we claim that 
\begin{align}
\label{eq:Phi_0^(N)}
\Phi_0^{(N)}(X,Y,Z;z)=\sum_{0<n<N}
&\prod_{0<m<n}\biggl(1+\frac{Y}{m}+\frac{Z}{m^2}+\frac{XZ}{m^3}+\frac{X^2Z}{m^4}+\cdots\biggr)\\
\notag 
&\times\biggl(\frac{1}{n^2}+\frac{X}{n^3}+\frac{X^2}{n^4}+\cdots\biggr)
\frac{(N-n)_n}{(Nz^{-1}-n)_n}. 
\end{align}
Indeed, expanding the product in the right-hand side, we obtain the sum of the terms 
\[\frac{X^{k-r-h}Y^{r-h}Z^{h-1}}{m_1^{k_1}\cdots m_r^{k_r}}\frac{(N-m_r)_{m_r}}{(Nz^{-1}-m_r)_{m_r}},\]
where $m_1,\ldots,m_r$ run through integers with $0<m_1<\cdots<m_r<N$, 
$k_1,\ldots,k_r$ run through integers with $k_1,\ldots,k_{r-1}\ge 1$, $k_r\ge 2$, 
and $k$ and $h$ denote the numbers $k_1+\cdots+k_r$ and $\#\{i\mid k_i>1\}$ respectively. 
The whole sum equals $\Phi_0^{(N)}(X,Y,Z;z)$ by definition. 

Since 
\begin{align*}
1+\frac{Y}{m}+\frac{Z}{m^2}+\frac{XZ}{m^3}+\frac{X^2Z}{m^4}+\cdots
&=1+\frac{Y}{m}+\frac{Z}{m(m-X)}\\
&=\frac{m^2+(Y-X)m+Z-XY}{m(m-X)}\\
&=\frac{(m+\alpha-X)(m+\beta-X)}{m(m-X)}
\end{align*}
and 
\[\frac{1}{n^2}+\frac{X}{n^3}+\frac{X^2}{n^4}+\cdots=\frac{1}{n(n-X)}, \]
we obtain from \eqref{eq:Phi_0^(N)} that 
\begin{align*}
\Phi_0^{(N)}(X,Y,Z;z)
&=\sum_{0<n<N}\prod_{0<m<n}\frac{(m+\alpha-X)(m+\beta-X)}{m(m-X)}\cdot
\frac{1}{n(n-X)}\frac{(N-n)_n}{(Nz^{-1}-n)_n}\\
&=\sum_{0<n<N}\frac{(1+\alpha-X)_{n-1} (1+\beta-X)_{n-1}}{(1-X)_n n!}
\frac{(N-n)_n}{(Nz^{-1}-n)_n}\\
&=\frac{1}{(\alpha-X)(\beta-X)}\sum_{0<n<N}
\frac{(\alpha-X)_n (\beta-X)_n}{(1-X)_n n!}\frac{(N-n)_n}{(Nz^{-1}-n)_n}\\
&=\frac{1}{Z-XY}\biggl\{\trF{2}{1}{(N)}\biggl(\varray{\alpha-X,\beta-X\\ 1-X};z\biggr)-1\biggr\}. \qedhere
\end{align*}
\end{proof}

By combining Theorem \ref{thm:tOZ} with Corollary \ref{cor:tHG theorem}, we obtain: 

\begin{cor}\label{cor:tOZ sp}
\begin{equation}\label{eq:tOZ sp}
\begin{split}
\Phi_0^{(N)}&\biggl(X,Y,Z;\frac{N}{N-Y}\biggr)\\
&=\frac{1}{Z-XY}\biggl\{\frac{(1-\alpha)_{N-1}(1-\beta)_{N-1}}{(1-X)_{N-1}(1-Y)_{N-1}}-1\biggr\}\\
&=\frac{1}{Z-XY}\Biggl\{\exp\Biggl(
\sum_{k=2}^\infty\frac{\zeta^{(N)}(k)}{k}(X^k+Y^k-\alpha^k-\beta^k)\Biggr)-1\Biggr\}. 
\end{split}
\end{equation}
\end{cor}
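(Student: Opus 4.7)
The plan is to chain together Theorem \ref{thm:tOZ} and Corollary \ref{cor:tHG theorem} for the first equality, then expand rising factorials logarithmically for the second.

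\textbf{First equality.} By Theorem \ref{thm:tOZ},
\[
\Phi_0^{(N)}(X,Y,Z;z)=\frac{1}{Z-XY}\biggl\{\trF{2}{1}{(N)}\biggl(\varray{\alpha-X,\beta-X\\ 1-X};z\biggr)-1\biggr\},
\]
so I only need to evaluate the $\trF{2}{1}{(N)}$-factor at $z=N/(N-Y)$. Via the relation $\trF{2}{1}{(N)}(\cdots;z)=\trF{2}{1}{[N-1]}(\cdots;(N-1)z/N)$ already recorded in the excerpt, this specialization translates into evaluating $\trF{2}{1}{[N-1]}$ at $(N-1)/(N-Y)$. I then apply Corollary \ref{cor:tHG theorem} (with $N$ replaced by $N-1$) for the parameters $a=\alpha-X$, $b=\beta-X$, $c=1-X$. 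The key arithmetic check is that $c-a-b=(1-X)-(\alpha-X)-(\beta-X)=1+X-(\alpha+\beta)=1-Y$ using $\alpha+\beta=X+Y$, so $(N-1)/\bigl((N-1)+(c-a-b)\bigr)=(N-1)/(N-Y)$ matches exactly. The corollary then gives
\[
\trF{2}{1}{[N-1]}\biggl(\varray{\alpha-X,\beta-X\\ 1-X};\frac{N-1}{N-Y}\biggr)=\frac{(1-\alpha)_{N-1}(1-\beta)_{N-1}}{(1-X)_{N-1}(1-Y)_{N-1}},
\]
which establishes the first equality.

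\textbf{Second equality.} I take the logarithm of the quotient of rising factorials. Writing $(1-u)_{N-1}=\prod_{m=1}^{N-1}(m-u)=(N-1)!\prod_{m=1}^{N-1}(1-u/m)$ and expanding $\log(1-u/m)=-\sum_{k\ge 1}u^k/(km^k)$ yields the formal identity
\[
\log(1-u)_{N-1}=\log(N-1)!-\sum_{k=1}^{\infty}\frac{\zeta^{(N)}(k)}{k}u^k,\qquad \zeta^{(N)}(k):=\sum_{m=1}^{N-1}\frac{1}{m^k}.
\]
Substituting $u\in\{\alpha,\beta,X,Y\}$ and combining with the appropriate signs, the $\log(N-1)!$ contributions cancel and
\[
\log\frac{(1-\alpha)_{N-1}(1-\beta)_{N-1}}{(1-X)_{N-1}(1-Y)_{N-1}}=\sum_{k=1}^{\infty}\frac{\zeta^{(N)}(k)}{k}\bigl(X^k+Y^k-\alpha^k-\beta^k\bigr).
\]
The $k=1$ summand vanishes because $\alpha+\beta=X+Y$, so the sum effectively starts at $k=2$; exponentiating gives the claimed identity.

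\textbf{Where difficulty might arise.} There is no serious obstacle: the whole argument is the observation that $z=N/(N-Y)$ is precisely the Pfaff--Saalsch\"utzian value for the parameters produced by Theorem \ref{thm:tOZ}, after which everything is formal. The only points requiring care are the $[N-1]$-vs-$(N)$ bookkeeping and the parameter check $c-a-b=1-Y$ for the first equality, and the cancellation of the $k=1$ term for the second (which is the structural reason $k=2$ is the correct lower bound of the sum in the statement).
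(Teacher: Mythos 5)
Your proposal is correct and follows essentially the same route as the paper: the paper also obtains the first equality by combining Theorem \ref{thm:tOZ} with Corollary \ref{cor:tHG theorem} (your explicit check that $c-a-b=1-Y$ and the $[N-1]$-versus-$(N)$ bookkeeping are exactly the implicit content of that step), and it derives the second equality by the same logarithmic expansion with the $k=1$ term cancelling via $\alpha+\beta=X+Y$.
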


Here $\zeta^{(N)}(k)\coloneqq \sum_{n=1}^{N-1}\frac{1}{n^k}$ denotes the truncated zeta value, 
and the second equation is seen as follows: 
\begin{align*}
\log\frac{(1-\alpha)_{N-1}(1-\beta)_{N-1}}{(1-X)_{N-1}(1-Y)_{N-1}}
&=\sum_{n=1}^{N-1}\log\frac{(1-\frac{\alpha}{n})(1-\frac{\beta}{n})}{(1-\frac{X}{n})(1-\frac{Y}{n})}\\
&=\sum_{k=1}^\infty\sum_{n=1}^{N-1}\frac{1}{k}
\biggl\{\biggl(\frac{X}{n}\biggr)^k+\biggl(\frac{Y}{n}\biggr)^k
-\biggl(\frac{\alpha}{n}\biggr)^k-\biggl(\frac{\beta}{n}\biggr)^k\biggr\}\\
&=\sum_{k=2}^\infty\frac{\zeta^{(N)}(k)}{k}(X^k+Y^k-\alpha^k-\beta^k). 
\end{align*}

The equation \eqref{eq:tOZ sp} is a finite analogue of \cite[Theorem 1]{OZ}:  
\[\begin{split}
\Phi_0(X,Y,Z;1)
&=\frac{1}{Z-XY}\biggl\{\frac{\Gamma(1-X)\Gamma(1-Y)}{\Gamma(1-\alpha)\Gamma(1-\beta)}-1\biggr\}\\
&=\frac{1}{Z-XY}\Biggl\{\exp\Biggl(
\sum_{k=2}^\infty\frac{\zeta(k)}{k}(X^k+Y^k-\alpha^k-\beta^k)\Biggr)-1\Biggr\}. 
\end{split}\]
From this equation, many relations among multiple zeta values are deduced. 
For instance, the symmetry in $X$ and $Y$ implies the relation 
\[\sum_{\bk\in I_0(k,r,h)}\zeta(\bk)=\sum_{\bk\in I_0(k,k-r,h)}\zeta(\bk). \]
In fact, this relation is also a consequence of the duality relation, 
which shows that there is one-to-one correspondence between the sets $I_0(k,r,h)$ and $I_0(k,k-r,h)$ 
such that the corresponding multiple zeta values are equal. 

To examine a similar interpretation for the equation \eqref{eq:tOZ sp}, let us expand the left-hand side 
as a power series in $X$, $Y$ and $Z$. 
By definition, we have 
\[\Phi_0^{(N)}\biggl(X,Y,Z;\frac{N}{N-Y}\biggr)
=\sum_{k,r,h>0}X^{k-r-h}Y^{r-h}Z^{h-1}\sum_{\bk\in I_0(k,r,h)}\Li_\bk^{(N)}\biggl(\frac{N}{N-Y}\biggr) \]
and 
\begin{align*}
\Li_\bk^{(N)}\biggl(\frac{N}{N-Y}\biggr)
&=\sum_{0<m_1<\cdots<m_r<N}\frac{1}{m_1^{k_1}\cdots m_r^{k_r}}\frac{(N-m_r)_{m_r}}{(N-Y-m_r)_{m_r}}\\
&=\sum_{0<m_1<\cdots<m_r<N}\frac{1}{m_1^{k_1}\cdots m_r^{k_r}}
\prod_{n=1}^{m_r}\biggl(1-\frac{Y}{N-n}\biggr)^{-1}\\
&=\sum_{0<m_1<\cdots<m_r<N}\frac{1}{m_1^{k_1}\cdots m_r^{k_r}}
\sum_{\substack{l\ge 0\\ 0<n_1\le\cdots\le n_l\le m_r}}
\frac{Y^l}{(N-n_1)\cdots(N-n_l)} 
\end{align*}
for $\bk=(k_1,\ldots,k_r)$. 
If we put 
\[\tilde{\zeta}^{(N)}(k_1,\ldots,k_r;l)\coloneqq 
\sum_{0<m_1<\cdots<m_r<N}\frac{1}{m_1^{k_1}\cdots m_r^{k_r}}
\sum_{0<n_1\le\cdots\le n_l\le m_r}\frac{1}{(N-n_1)\cdots(N-n_l)} \]
and 
\[\tilde{I}_0(k,q,h)\coloneqq 
\biggl\{(k_1,\ldots,k_r;l)\in\Z_{>0}^r\times\Z_{\ge 0}\biggm| \begin{array}{l}
r>0, k_r>1,\ k_1+\cdots+k_r+l=k,\\
r+l=q,\ \#\{i\mid k_i>1\}=h\end{array}\biggr\}, \]
then we have: 

\begin{prop}\label{prop:Phi_0^(N)}
\[\Phi_0^{(N)}\biggl(X,Y,Z;\frac{N}{N-Y}\biggr)
=\sum_{k,q,h>0}X^{k-q-h}Y^{q-h}Z^{h-1}\sum_{(\bk;l)\in \tilde{I}_0(k,q,h)}\tilde{\zeta}^{(N)}(\bk;l). \]
\end{prop}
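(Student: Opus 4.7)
The plan is to substitute the computation already carried out in the paragraphs preceding the statement directly into the definition of $\Phi_0^{(N)}$ and then re-index. First I would recall the definition
\[\Phi_0^{(N)}\biggl(X,Y,Z;\frac{N}{N-Y}\biggr)
=\sum_{k,r,h>0}X^{k-r-h}Y^{r-h}Z^{h-1}\sum_{\bk\in I_0(k,r,h)}\Li_\bk^{(N)}\biggl(\frac{N}{N-Y}\biggr)\]
and the just-derived expansion
\[\Li_\bk^{(N)}\biggl(\frac{N}{N-Y}\biggr)
=\sum_{l\ge 0}Y^l\sum_{0<m_1<\cdots<m_r<N}\frac{1}{m_1^{k_1}\cdots m_r^{k_r}}
\sum_{0<n_1\le\cdots\le n_l\le m_r}\frac{1}{(N-n_1)\cdots(N-n_l)},\]
which (for each fixed $\bk$) is precisely $\sum_{l\ge 0}Y^l\tilde{\zeta}^{(N)}(\bk;l)$ by the definition of $\tilde{\zeta}^{(N)}$.

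Next I would plug this in and exchange the order of summation, so that the expression becomes
\[\sum_{k,r,h>0}\sum_{\bk\in I_0(k,r,h)}\sum_{l\ge 0} X^{k-r-h}Y^{r-h+l}Z^{h-1}\,\tilde{\zeta}^{(N)}(\bk;l).\]
The key combinatorial step is then a change of summation variables. For each triple $(\bk,l)$ appearing here, set $k'=k+l$, $q=r+l$ and keep $h$. The constraints on $\bk\in I_0(k,r,h)$ together with the range $l\ge 0$ translate exactly into $(\bk;l)\in \tilde{I}_0(k',q,h)$, since the defining conditions ``$r>0$, $k_r>1$, $k_1+\cdots+k_r+l=k'$, $r+l=q$, $\#\{i\mid k_i>1\}=h$'' match one-to-one with the original admissibility conditions on $\bk$ plus the choice of $l$. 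Moreover the monomial transforms as
\[X^{k-r-h}Y^{r-h+l}Z^{h-1}=X^{(k+l)-(r+l)-h}Y^{(r+l)-h}Z^{h-1}=X^{k'-q-h}Y^{q-h}Z^{h-1},\]
which is exactly the monomial in the statement.

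Reassembling these observations, the sum over $(k,r,h,\bk,l)$ becomes a sum over $(k',q,h)$ with positive $k',q,h$ and over $(\bk;l)\in \tilde{I}_0(k',q,h)$, yielding the asserted identity. The whole argument is essentially a bookkeeping exercise; the only point requiring a moment of care is verifying that the bijection between the index sets $\{(\bk,l)\mid \bk\in I_0(k,r,h),\ l\ge 0\}$ and $\bigsqcup_{k',q}\tilde{I}_0(k',q,h)$ is correct, which is immediate from comparing the two lists of conditions. No further analytic or algebraic input is needed beyond what is already in the excerpt.
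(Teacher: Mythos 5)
Your proposal is correct and follows essentially the same route as the paper, which derives the expansion of $\Li_\bk^{(N)}(N/(N-Y))$ in powers of $Y$ immediately before the proposition and leaves the reindexing $(k,r,h,\bk,l)\mapsto(k+l,r+l,h,\bk,l)$ implicit. Your explicit verification of the bijection with $\tilde{I}_0(k',q,h)$ and of the monomial transformation is exactly the bookkeeping the paper omits.
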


The following corollary follows easily from Corollary \ref{cor:tOZ sp} and Proposition \ref{prop:Phi_0^(N)}: 

\begin{cor}\label{cor:symmetry}
\begin{enumerate}
\item 
For each $k,q,h>0$, there exists a homogeneous polynomial $P_{k,q,h}(Z_2,Z_3,\ldots)$ of degree $k$ 
(with $Z_j$ being of degree $j$), indepenedent of $N$, 
such that the identity 
\[\sum_{(\bk;l)\in \tilde{I}_0(k,q,h)}\tilde{\zeta}^{(N)}(\bk;l)
=P_{k,q,h}(\zeta^{(N)}(2),\zeta^{(N)}(3),\ldots)\]
holds. 

\item We have the symmetry 
\[\sum_{(\bk;l)\in \tilde{I}_0(k,q,h)}\tilde{\zeta}^{(N)}(\bk;l)
=\sum_{(\bk;l)\in \tilde{I}_0(k,k-q,h)}\tilde{\zeta}^{(N)}(\bk;l). \]
\end{enumerate}
\end{cor}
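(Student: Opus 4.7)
The plan is to read off both parts from a coefficient-by-coefficient comparison of the two expressions for $\Phi_0^{(N)}(X,Y,Z;N/(N-Y))$ provided by Proposition \ref{prop:Phi_0^(N)} (combinatorial expansion) and Corollary \ref{cor:tOZ sp} (closed form). Writing $S(k,q,h)\coloneqq \sum_{(\bk;l)\in \tilde{I}_0(k,q,h)}\tilde{\zeta}^{(N)}(\bk;l)$, Proposition \ref{prop:Phi_0^(N)} identifies $S(k,q,h)$ with the coefficient of $X^{k-q-h}Y^{q-h}Z^{h-1}$ in the closed form.

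For part (1), I would first observe that $\alpha^j+\beta^j$ is, by Newton's identities, a polynomial in the symmetric functions $\alpha+\beta=X+Y$ and $\alpha\beta=Z$. Hence the argument of the exponential in Corollary \ref{cor:tOZ sp} is a power series in $X,Y,Z$ over $\mathbb{Q}[\zeta^{(N)}(2),\zeta^{(N)}(3),\ldots]$. The exponential and the division by $Z-XY$ (which yields a genuine power series, as already encoded in the validity of Corollary \ref{cor:tOZ sp}) preserve this property, and none of these operations involves $N$ other than through the $\zeta^{(N)}(j)$; extracting the coefficient of $X^{k-q-h}Y^{q-h}Z^{h-1}$ then writes $S(k,q,h)=P_{k,q,h}(\zeta^{(N)}(2),\zeta^{(N)}(3),\ldots)$ for an $N$-independent polynomial. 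To see that $P_{k,q,h}$ is homogeneous of degree $k$, I would introduce the weight assignment $\mathrm{wt}(\zeta^{(N)}(j))=j=\mathrm{wt}(Z_j)$, $\mathrm{wt}(X)=\mathrm{wt}(Y)=1$, and $\mathrm{wt}(Z)=2$. Each summand $(\zeta^{(N)}(j)/j)(X^j+Y^j-\alpha^j-\beta^j)$ has $\zeta$-weight equal to $XYZ$-weight, both $j$, and this balance is preserved by sums, products, and the exponential. The lowest-weight piece $X^2+Y^2-\alpha^2-\beta^2=2(Z-XY)$ shows that division by $Z-XY$ shifts the balance to $\zeta$-weight $=\,$$XYZ$-weight $+\,2$, so the coefficient of $X^{k-q-h}Y^{q-h}Z^{h-1}$ (whose $XYZ$-weight is $k-2$) has $\zeta$-weight $k$.

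For part (2), I would exploit that the right-hand side of Corollary \ref{cor:tOZ sp} is manifestly invariant under $X\leftrightarrow Y$: the factor $(Z-XY)^{-1}$ is symmetric, and the exponential argument is symmetric because $X^j+Y^j$ and $\alpha^j+\beta^j$ (the latter a polynomial in the symmetric quantities $X+Y$ and $Z$) both are. In the expansion of Proposition \ref{prop:Phi_0^(N)}, swapping $X\leftrightarrow Y$ sends the monomial $X^{k-q-h}Y^{q-h}Z^{h-1}$ to the one indexed by $q'=k-q$ (with the same $k$ and $h$), and comparing coefficients then yields the claimed symmetry $S(k,q,h)=S(k,k-q,h)$.

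The only real obstacle is the weight bookkeeping for part (1); the fact that $(\exp(\cdots)-1)/(Z-XY)$ represents a genuine power series is already built into Corollary \ref{cor:tOZ sp}, so no new analytic content is needed, and the rest is a direct coefficient extraction.
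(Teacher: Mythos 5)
Your proposal is correct and is precisely the argument the paper has in mind: the corollary is stated to ``follow easily'' from Corollary \ref{cor:tOZ sp} and Proposition \ref{prop:Phi_0^(N)}, and your coefficient extraction (noting that $(k,q,h)\mapsto(k-q-h,q-h,h-1)$ is injective), the weight bookkeeping for homogeneity of degree $k$, and the $X\leftrightarrow Y$ symmetry sending $(k,q,h)$ to $(k,k-q,h)$ supply exactly the omitted details.
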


\begin{rem}
Unlike the case of the multiple zeta values, 
there is no one-to-one correspondence which refines Corollary \ref{cor:symmetry} in general. 
In fact, the sets $\tilde{I}_0(k,q,h)$ and $\tilde{I}_0(k,k-q,h)$ 
do not necessarily have the same cardinality, e.g., 
\[\tilde{I}_0(3,1,1)=\{(3;0)\},\quad \tilde{I}_0(3,2,1)=\{(1,2;0),(2;1)\}. \]
\end{rem}

Finally, we point out that the (seemingly ad-hoc) definition of $\tilde{\zeta}^{(p)}(\bk;l)$ is 
somewhat similar to the series expression of the Arakawa-Kaneko multiple zeta value: 
\begin{align*}
\xi(\bk;l)
&\coloneqq\frac{1}{\Gamma(l)}\int_0^\infty\frac{\Li_{\bk}(1-e^{-x})}{e^x-1}x^{l-1}dx\\
&=\sum_{\substack{0<m_1<\cdots<m_r\\ 0<n_1\le\cdots\le n_{l-1}\le m_r}}
\frac{1}{m_1^{k_1}\cdots m_r^{k_r+1}}\frac{1}{n_1\cdots n_{l-1}}. 
\end{align*}
The similarity becomes clearer if we assume that $N=p$ is a prime and consider the value modulo $p$, 
as in the setting of the finite multiple zeta values (cf.\ \cite[\S7]{K}). 
Indeed, the truncated Arakawa-Kaneko multiple zeta value 
\[\xi^{(p)}(\bk;l)
\coloneqq\sum_{\substack{0<m_1<\cdots<m_r<p\\ 0<n_1\le\cdots\le n_{l-1}\le m_r}}
\frac{1}{m_1^{k_1}\cdots m_r^{k_r+1}}\frac{1}{n_1\cdots n_{l-1}}\]
satisfies 
\[\xi^{(p)}(\bk;l)\equiv
(-1)^{l-1}\tilde{\zeta}^{(p)}(k_1,\ldots,k_{r-1},k_r+1;l-1)\pmod{p}. \]



\begin{thebibliography}{9}
\bibitem{AAR}
George E.~Andrews, Richard Askey and Ranjan Roy, 
\textit{Special Functions}, Encyclopedia of Mathematics and Its Applications \textbf{71}, 
Cambridge University Press, 1999. 

\bibitem{HMS}
Minoru Hirose, Toshiki Matsusaka and Shin-ichiro Seki, 
A discretization of the iterated integral expression of the multiple polylogarithm, 
preprint, 2024, arXiv:2404.15210. 

\bibitem{K}
M.~Kaneko, 
An introduction to classical and finite multiple zeta values, 
\textit{Publ.\ Math.\ Besan\c{c}on Alg\`{e}bre Th\'{e}orie Nr.} no.~1 (2019), 103--129. 

\bibitem{MSW}
Takumi Maesaka, Shin-ichiro Seki and Taiki Watanabe, 
Deriving two dualities simultaneously from a family of identities for multiple harmonic sums, 
preprint, 2024, arXiv:2402.05730. 

\bibitem{OZ}
Yasuo Ohno and Don Zagier, 
Multiple zeta values of fixed weight, depth, and height
\textit{Indag.\ Mathem.} \textbf{12} (2001), 483--487. 

\end{thebibliography}
\end{document}